\documentclass[12pt,twoside,reqno]{amsart}
\usepackage{amsmath, amsfonts, amsthm, amssymb,amscd, graphicx, amscd}
\usepackage{float,epsf}
\usepackage[english]{babel}
\usepackage{enumerate}
\usepackage{tikz}

\usepackage{srcltx}
\usepackage{geometry}
\usepackage{verbatim}
\usepackage{mathrsfs}
\usepackage{hyperref}
\usepackage{enumitem} 
\usepackage{esint}

\newcommand{\R}{\mathbb{R}}

\newcommand{\tn}{\textnormal}

\newcommand{\eps}{\varepsilon}

\newtheorem{theorem}{Theorem}[section]
\newtheorem{lemma}[theorem]{Lemma}

\newtheorem{definition}[theorem]{Definition}

\newtheorem{remark}[theorem]{Remark}
\newtheorem{example}[theorem]{Example}
\newtheorem{notation}[theorem]{Notation}

\numberwithin{equation}{section}
\numberwithin{figure}{section}

\newcommand{\norm}[1]{\left\|#1\right\|}

\newcommand{\FF}{{\boldsymbol F}}

\renewcommand{\div}{\mathrm{div}} 

\renewcommand{\div}{\text{\sl div}\,} 

\def\intave#1{\int_{#1}\hbox{\llap{$\raise2.3pt\hbox{\vrule
height.9pt width7pt}\phantom{\scriptstyle{#1}}\mkern-2mu$}}}

\newcommand{\sizeofboundary}{R}

\begin{document}
\title[Group Theoretic Constructions of Singular Set in a Long Range Segregation Model]{Group Theoretic Constructions of Singular Set in a Long Range Segregation Model}

\author{Howen Chuah}
\address{Howen Chuah,
Department of Mathematics, Purdue University,
 150 N. University Street,
 West Lafayette, IN 47907-2067, USA}
\email{hchuah@purdue.edu}



\keywords{segregation models, free boundary problems, singular set, stratification, symmetry, group action}
\subjclass[2020]{Primary: 35J47, 35R35;
Secondary: 	20B35}

\begin{abstract}
In this paper, we construct several explicit examples of singular sets of Hausdorff dimension $(n-2)$ in $\mathbb{R}^n$ on free boundaries for an elliptic system modeling long range segregation. The system has been previously studied by Caffarelli, Patrizi and Quitalo in \cite{CL2} for the regularity of the free boundary in dimension two, and by the author and Torres in \cite{ChPaTo26_2} for the partial regularity in higher dimensions. However, the dimension of the singular set is unknown, and no concrete examples of singular set are known in the literature due to the nonlocal nature of the elliptic system. In this paper, we overcome this difficulty by rigidity and finite group action. As a byproduct of our result, we see that singular points can exist for the model in any dimensions. We also show that our method can be applied to the study of the singular set in the adjacent model. Finally, we also discuss some related open problems for future studies.
\,\\\end{abstract}

\maketitle

\section{Introduction}

In this paper, we consider the following elliptic system 
\begin{equation}
\label{main_problems}
\begin{cases} \Delta u^{\varepsilon}_{i}= \frac{1}{\eps^2} u^{\eps}_i  \sum_{j \neq i}  H_R(u^{\eps}_j)(x)\quad & \text{ in } \Omega,\\
        u^{\eps}_i= f_i & \text{ on } (\partial \Omega)_{\leq {\sizeofboundary}}, \\
        
        u^{\eps}_i \geq 0 & \text{ in } \Omega \cup (\partial \Omega)_{\leq \sizeofboundary},
       \end{cases}
\end{equation}
for  $i=1,\ldots, K$, 
where $\Omega$ is a bounded Lipschitz domain in  $\R^n$, $\eps >0$, $0 < \sizeofboundary \leq 1$. The boundary neighborhood is defined as 
\begin{equation*}
    (\partial \Omega)_{\leq \sizeofboundary}:= \{x \in \Omega^c: d(x, \partial \Omega)\leq \sizeofboundary\} ,
\end{equation*}
where $d(x,\partial \Omega) := \inf_{y \in \partial \Omega}|x-y|$ denotes the Euclidean distance from $x$ to $\partial \Omega$. 
The boundary data are nonnegative H{\"o}lder continuous functions
with supports separated by at least distance $\sizeofboundary$ (see assumptions \eqref{fiassumption}-\eqref{f_i-f_j_disjointsupport}).

Each equation in the system is coupled to the others through a zero-order interaction term
$H_R(u_j^\eps)$, which depends  on the parameter $R$. 
In \eqref{main_problems}, we consider the case where $H_{\sizeofboundary}$ is defined by the following integral average:

\begin{equation}
\label{H1}
H_R(w)(x)= \fint_{{B}_\sizeofboundary(x)}w(y)\,dy,
\end{equation}

where $w \geq 0$. We assume that the boundary data satisfy  
\begin{equation}    
\label{fiassumption}
        f_i : (\partial \Omega)_{\leq R} \rightarrow \mathbb{R}, f_i \geq 0, f_i \neq 0, \quad \textnormal{$f_i$ is H\"older continuous,} 
    \end{equation}
and that there is a constant $c > 0$ such that, for any $x \in \partial \Omega \cap \text{supp } f_i$,
\begin{equation}
\label{size_of_ball_intersect_support}
|{B}_r(x) \cap \text {supp }f_i| \geq c|{B}_r(x)|,
\end{equation}
and
       \begin{equation}\label{f_i-f_j_disjointsupport}       
        d(\tn{supp} \, f_i, \tn{supp} \, f_j) \geq \sizeofboundary \textnormal{ for all } i \neq j. \\
        \end{equation}

The existence of positive solutions $(u_1^\eps,\ldots,u_K^\eps)$ of the system \eqref{main_problems} was proved in \cite{CL2}. Uniqueness of solutions to system \eqref{main_problems} was proved in \cite{Bozorgnia}. It was also shown in \cite{CL2} that solutions converge to a limit configuration $(u_1,\ldots, u_K)$ as $\eps \to 0$, where the supports of the populations $u_i$  are mutually disjoint and separated from each other by distance $\sizeofboundary$ (see Figure \ref{generalpicture}). The regularity of the free boundary for $n=2$ was also established. For system \eqref{main_problems}, one of the main challenges in the analysis of the free boundaries is that classical techniques (i.e., monotonicity formulas) can not be used. The techniques developed in \cite{CL2} rely on the concepts of {\it asymptotic cone} and {\it asymptotic angle}. At a singular point, the angle is measured as the intersection of asymptotic cones.  
This construction was generalized in \cite{ChPaTo26_2} to characterize the regular and singular points in any dimension in terms of angles and densities.

For the case of $n=2$, it was shown in \cite{CL2} that the singular points are isolated and the regular set is locally $C^1$. It was also shown, under additional conditions, that the free boundary is Lipschitz. A result on the equality of angles is also established. Another tool for $n=2$ is the construction of harmonic functions (i.e., barrier functions) on cones vanishing on the boundary. 

For higher dimensions, it was shown in \cite{ChPaTo26_2} that if the angles at the singular points are bounded away from $\frac{n\omega_n}{2}$, the regular set is open and locally $C^1$. It was also shown that under a convexity condition, the free boundary consist of finitely many hyperplanes of dimension $n-1$, and the angles of the singular points are bounded above by $\frac{n\omega_n}{3}$. Moreover, if there are two free boundary points that are at distance $\sizeofboundary$ apart form each other, either they are both regular or they are both singular. A symmetry condition on the free boundary is also derived.

System \eqref{main_problems} is an example of \eqref{modelproblem}, the Gause-Lotka-Volterra system in population dynamics, that models coexistence of species that live in the same territory, diffuse, and compete for limited resources.
 
\begin{equation}\label{modelproblem}L_i(u^\eps _i)=\frac{u^\eps_i}{\eps^2}F(u^\eps_1,\ldots,u^\eps_K),\end{equation}
in some domain $\Omega$, where $u^\eps_i$ is a positive function representing the density of the $i$-th species, $L_i$ encodes  the diffusion of  $u^\eps_i$, and   $u_i^\eps F(u^\eps_1,\ldots,u^\eps_K)/\eps^2$ models  the  attrition of the species $i$ due to competition with the others. The interaction functional  $F$ is strictly positive whenever the supports of two or more species overlap.
The smaller the parameter $\eps$, the stronger the competition among species. 
In the limit as $\eps\to0^+$ the high  competition  forces the species  to segregate, meaning $u_iu_j=0$ for $j\neq i$.

Another example of system \eqref{modelproblem} is given by 
\begin{equation}
\label{adjacentsegregation}
\begin{cases} \Delta u^{\varepsilon}_{i}= \frac{1}{\eps^2}   \sum_{j \neq i}  u^{\eps}_iu^{\eps}_j\quad & \text{ in } \Omega,\\
        u^{\eps}_i= f_i & \text{ on } \partial \Omega, \\
        u^{\eps}_i \geq 0 & \text{ in } \overline{\Omega}.
\end{cases}
\end{equation}
The existence of positive solutions to \eqref{adjacentsegregation} was initially investigated by Dancer and Du \cite{DancerDu1, DancerDu2} in the case of three species. 
Convergence to a segregated limit configuration as  $\eps\to0^+$ was later proven by Dancer, Hilhorst, Mimura, and Peletier \cite{DancerHilMimPel}. 
 More general classes of linear competitive systems, including \eqref{adjacentsegregation} as a special case, have been studied by Conti, Terracini, and Verzini \cite{Conti4, Conti3, Conti5}. %
 We also refer to \cite{CL7, Conti6} for related optimal partition problems involving the first eigenvalue of the Laplace operator.  
 The geometric properties of the free boundaries $\partial\{u_i>0\}\cap\Omega$ for the system \eqref{adjacentsegregation} have been investigated by Caffarelli, Karakhanyan and Lin \cite{CL5} (see also \cite{CL4}). It was shown that the free boundary splits into two parts: a regular set,  which is a locally  analytic surface, and a singular set, which is a closed set of Hausdorff dimension at most $n-2$. Singular points occur where the boundaries of three or more connected components of the supports intersect. See \cite{TavarezTerracini} for similar results applied to a broader class of systems.

{

The system \eqref{adjacentsegregation}, when the Laplace operator is replaced by the fully nonlinear negative Pucci operator, has been studied by Quitalo in \cite{V} and Caffarelli, Quitalo, Patrizi, and Torres in {\cite{CL3}}. 


The interaction between the populations in \eqref{adjacentsegregation} is local, meaning that it depends only on the value of the densities, $u_i(x)$, at the point $x$. The segregation is adjacent since the supports of the populations have a common boundary. However, there are many processes where the growth of species $i$ is inhibited by populations $j$ occupying an entire neighborhood around $x$, see for example \cite{CuMaMa, MiEiFang}. Caffarelli, Patrizi, and Quitalo \cite{CL2} introduced system \eqref{main_problems} with the Laplace operator as an example to model non-local interactions. The same system was studied by the author, Patrizi, and Torres in \cite{ChPaTo26} with Laplacian replaced by the negative Pucci operator.  
When $H$ is given by \eqref{H1} with $w^2(y)$ in place of $w(y)$, minimizing solutions and the limiting configurations of \eqref{main_problems} have been studied in \cite{NS1,NS2}.

However, no concrete examples of the free boundary were constructed in the literature for the long range segregation model, due to the nonlocal nature of the elliptic system \eqref{main_problems}. In this paper, we construct several explicit examples of singular sets based on rigidity and group action. We will construct for any $n \geq 2$ and $K \geq 3$ populations, a bounded domain $\Omega \subset \mathbb{R}^n$ and $K$ boundary data, such that the singular set on the free boundary is nonempty and has Hausdorff dimension exactly $n-2$. In the special case where $K = 2^n$, we can construct an example of $K$ boundary data on the unit ball $B_1(0) \subset \mathbb{R}^n$ such that the singular set has Hausdorff dimension $n-2$ and with a stratification result. Finally, we also give an example of two populations in any dimension in which the free boundaries are concentric spheres. 

We state our main result as follows.

\begin{theorem}
\label{Main_Result_1}
Consider the elliptic system \eqref{main_problems} with $H_R(w)(x)$ given by \eqref{H1}.  
\begin{enumerate}
\item For any $n \geq 2$ and $K \geq 3$, there exists a bounded Lipschitz domain $\Omega \subset \mathbb{R}^n$ and boundary data $f_1, \cdots, f_K : (\partial \Omega)_{\leq \sizeofboundary} \rightarrow \mathbb{R}$ satisfying \eqref{fiassumption}-\eqref{f_i-f_j_disjointsupport}, such that the singular set on the free boundary is nonempty with Hausdorff dimension exactly $n-2$.
\item Take $\Omega := B_1(0) \subset \mathbb{R}^n$ to be the unit open ball in $\mathbb{R}^n$. Then there exists $K = 2^n$ boundary data $f_1, \cdots, f_K: (\partial \Omega)_{\leq \sizeofboundary} \rightarrow \mathbb{R}$ satisfying \eqref{fiassumption}-\eqref{f_i-f_j_disjointsupport}, such that for each $i = 1, \cdots, K$, the singular set on the free boundary takes the form 
\begin{eqnarray}
\Gamma^{\textnormal{sin}}_{i,0} \cup \Gamma^{\textnormal{sin}}_{i,1} \cup \cdots \cup \Gamma^{\textnormal{sin}}_{i,n-2},
\end{eqnarray}
where for each $k \in \{0,1,\cdots,n-2\}$, $\Gamma^{\textnormal{sin}}_{i,k}$ is nonempty and is a union of finitely many $k$-dimensional manifolds.
\end{enumerate}
\end{theorem}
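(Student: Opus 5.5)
The plan rests on two facts quoted in the introduction: uniqueness of solutions of \eqref{main_problems} from \cite{Bozorgnia}, and convergence as $\eps\to0$ to a segregated limit $(u_1,\dots,u_K)$ from \cite{CL2}. Together with the regular/singular characterization of \cite{ChPaTo26_2}, these drive the whole argument.

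\textbf{Symmetry transfer.} The first step is a rigidity lemma. Let $G$ be a finite subgroup of $O(n)$ that preserves $\Omega$ and therefore $(\partial\Omega)_{\leq R}$. Suppose each $g\in G$ induces a permutation $\sigma_g$ of $\{1,\dots,K\}$ with $f_i\circ g^{-1}=f_{\sigma_g(i)}$. The operator $H_R$ commutes with isometries, since $B_R(gx)=gB_R(x)$, and $\Delta$ is rotation invariant. Hence $(u^\eps_{\sigma_g^{-1}(i)}\circ g^{-1})_i$ solves the same system as $(u^\eps_i)_i$, and uniqueness forces $u^\eps_i\circ g^{-1}=u^\eps_{\sigma_g(i)}$. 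Passing to the limit, the supports satisfy $g(\mathrm{supp}\,u_i)=\mathrm{supp}\,u_{\sigma_g(i)}$. The same equivariance then holds for the free boundaries, the asymptotic cones, and the angles and densities of \cite{ChPaTo26_2}.

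\textbf{Part (2).} Take $\Omega=B_1(0)$ and $G=(\mathbb Z_2)^n$ generated by the coordinate reflections. Index the $2^n$ orthants by sign vectors $s\in\{\pm1\}^n$. Let $f_s$ be a fixed nonnegative H\"older bump supported in the part of the exterior annulus lying inside orthant $s$, pulled at least $R/2$ away from every coordinate hyperplane. This gives \eqref{f_i-f_j_disjointsupport}, and \eqref{size_of_ball_intersect_support} holds for smooth bumps. The reflection $x_k\mapsto -x_k$ swaps the orthants $s$ and $s'$ that differ in the $k$th sign.

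By the symmetry lemma, $\mathrm{supp}\,u_s$ and $\mathrm{supp}\,u_{s'}$ are mirror images across $\{x_k=0\}$. Since they are at distance at least $R$, each support lies in $\{s_k x_k\geq R/2\}$. The remaining task is to show the free boundary of $u_s$ near the origin is exactly the face $\{s_kx_k=R/2\}$, so that $\mathrm{supp}\,u_s\cap B_1$ is the shifted orthant $\{s_kx_k\geq R/2\ \forall k\}$. For this I would use the reflection symmetry together with the fact that the region strictly between shifted orthants cannot be occupied by anyone. I would also invoke the convexity/hyperplane result of \cite{ChPaTo26_2}, that the free boundary consists of finitely many hyperplane pieces. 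Given this, the free boundary of $u_s$ is the boundary of a shifted orthant intersected with the ball. Its points where exactly $m\ge2$ of the constraints are active form an $(n-m)$-dimensional corner stratum. These are non-flat cones, hence singular in the sense of \cite{ChPaTo26_2}, with angle $\omega$-fraction $2^{-m}$. Setting $k=n-m$, for $k=0,\dots,n-2$ we get $\Gamma^{\rm sin}_{s,k}$ as finitely many $k$-dimensional manifolds. The flat faces ($m=1$) are regular.

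\textbf{Part (1).} For general $K\ge3$, take $\Omega=D\times(-L,L)^{n-2}$, a slightly rounded Lipschitz cylinder, where $D\subset\mathbb R^2$ is a regular $K$-gon-like domain. Place $K$ congruent data rotated by the cyclic group $\mathbb Z_K$ acting on the first two variables, and make them reflection symmetric and nearly constant in the last $n-2$ variables. The symmetry lemma makes the picture $\mathbb Z_K$-equivariant, so the axis $\{0\}\times(-L,L)^{n-2}$ is a fixed set. Near the axis the supports are $K$ congruent wedges of opening $2\pi/K<\pi$, which is a singular configuration (angle bounded away from $n\omega_n/2$). This gives a singular set containing an $(n-2)$-dimensional piece of the axis, so its dimension is at least $n-2$. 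The upper bound $n-2$ comes from the partial regularity of \cite{ChPaTo26_2}, or directly by showing that off the axis the boundary is flat.

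\textbf{Main obstacle.} The hard step is identifying the limiting free boundary exactly, not just its symmetry. Symmetry only gives containment in the half-spaces $\{s_kx_k\ge R/2\}$. Ruling out a population retreating further, or a curved boundary, requires a comparison or rigidity argument. I would first try to show that the gap between neighboring supports is exactly $R$ along the symmetry plane, using the known property that free boundary points come in pairs at distance $R$. Combined with reflection, this should pin the faces to $\{x_k=\pm R/2\}$ near the origin, or at least near the fixed strata, which is all that is needed for the singular set claims.
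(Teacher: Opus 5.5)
Your symmetry-transfer step is the same as the paper's (Lemma~\ref{orthogonal_transformation} plus uniqueness). The gap is that the proposal stops exactly where both parts of the theorem are decided. You never prove that the supports are the shrunken chambers. You flag this as the ``main obstacle'', sketch a local pinning idea, and lean on the hyperplane result of \cite{ChPaTo26_2}, which is conditional on a convexity hypothesis you do not verify.

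There is also an earlier slip. Reflection across $\{x_k=0\}$ together with $d(S_s,S_{s'})\ge R$ only gives $|x_k|\ge R/2$ on $S_s$. To get the sign $s_kx_k\ge R/2$ you need Lemma~\ref{S_i_reach_the_boundary}: each component of $S_s$ has closure meeting $\{f_s>0\}$, so by connectedness no component can lie on the far side of the empty slab.

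The missing idea is then a short global argument. Suppose $S_\beta\subset V_\beta:=\{x\in\Omega:\beta_kx_k>R/2\ \forall k\}$ for \emph{every} $\beta$. Then each point of $V_s$ is at distance strictly greater than $R$ from the closed set $\{\beta_kx_k\ge R/2\ \forall k\}$, for every $\beta\neq s$; that set contains $S_\beta$ and, with your choice of data, $\operatorname{supp} f_\beta$. So a point $y_0\in\partial S_s\cap V_s$ would contradict the fact that every free boundary point has another population's free boundary within distance $R$ (Theorem~\ref{main theorem5-old}, \cite[Thm 7.1]{CL2}). Since $V_s$ is connected and $S_s\neq\emptyset$, this forces $S_s=V_s$ globally, not just near the origin.

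Singularity is then read off directly from Definition~\ref{basic_definition}. A boundary point with $m\ge 2$ active constraints has the $m$ distinct points $P_kx_0\in\partial C_s$ at distance $R$, while a face point has exactly one. No cones or angles are needed.

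In Part (1) the problems are more serious. You only invoke $\mathbb{Z}_K$-equivariance, and rotations do not fix the walls $\theta=2\pi j/K$. They give no reason for the supports to avoid those walls, let alone to be wedges; at most they keep the supports outside a tube around the axis.

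What is needed is that $f_1$ be symmetric about its own bisecting hyperplane in the $(x_1,x_2)$-variables. Then the dihedral group $D_{2K}$ acts, and the reflection through each wall swaps the two adjacent populations. With that, the argument above yields $S_i=V_i$, and symmetry or near-constancy in the last $n-2$ variables plays no role.

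Your description of the singular set is also wrong. The axis lies in the unoccupied region. The singular set of $S_i$ is the ridge of $V_i$, a parallel copy of the axis at distance $R/(2\sin(\pi/K))$ from it.

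Finally, the upper bound $n-2$ cannot be quoted from \cite{ChPaTo26_2}. No such dimension bound is available there, and the paper explicitly lists it as an open problem. Both the lower and the upper bound must come from the explicit identification $S_i=V_i$, which shows the singular set is exactly that ridge.
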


As an immediate consequence of Theorem \ref{Main_Result_1}, for any $n \geq 2$ there exists a bounded Lipschitz domain $\Omega$ and boundary data $f_1, \cdots, f_K : (\partial \Omega)_{\leq \sizeofboundary} \rightarrow \mathbb{R}$ such that the singular set is nonempty. This exhibits a different phenomenon with the Bernoulli free boundary problem, in which it is known that there is a critical dimension $d^* \in \{5,6,7\}$ such that singular points exist only for $n \geq d^*$. 


The paper is organized as follows. In section 2 we recall some known results and preliminaries. In section 3 we apply the method of group action to prove Theorem \ref{Main_Result_1}, and we also discuss some consequences of Theorem \ref{Main_Result_1} and related open problems.

\usetikzlibrary{calc}
\begin{figure}
\begin{tikzpicture}[scale=0.7]
    

    \filldraw[fill=blue!10, draw=blue!80, thick] 
        plot[domain=-4.6:4.6, samples=100] ({0.4*sin(1.5*\x r) - 0.5}, \x) 
        -- (-3.5, 3.1) arc (140:220:4.8) -- cycle;
    \node[blue!80, font=\large] at (-2.2, 0) {$S_1$};

    \filldraw[fill=red!10, draw=red!80, thick] 
        plot[domain=-4.6:4.6, samples=100] ({0.4*sin(1.5*\x r) + 0.5}, \x) 
        -- (3.5, 3.1) arc (40:-40:4.8) -- cycle;
    \node[red!80, font=\large] at (2.2, 0) {$S_2$};

    
\end{tikzpicture}
\caption{Possible configuration with two populations. The supports of the populations are the sets $S_i = \{u_i > 0\} \cap \Omega$. The distance between the the supports is $\sizeofboundary$.}
\label{generalpicture}
\end{figure}

\section{Preliminaries}
We first recall some known definitions and results in \cite{Bozorgnia}, \cite{CL2}, \cite{ChPaTo26}, and \cite{ChPaTo26_2} for the convenience of the reader. 

Firstly, we have the existence and uniqueness of solutions of \eqref{main_problems}. The existence of solutions was proved in \cite[Thm 4.1]{CL2} using Perron's method and Schauder fixed point theorem. The similar approach was adopted in \cite[Thm 1]{ChPaTo26} to extend the result with negative Pucci operator as diffusion. The uniqueness result was proved in \cite[Thm 3.3]{Bozorgnia} by sub- and sup-solution method. 

\begin{theorem}
\label{existence_and_uniqueness}
\cite[Thm 3.3]{Bozorgnia}\cite[Thm 4.1]{CL2}\cite[Thm 1]{ChPaTo26}

Let $\Omega \subset \mathbb{R}^n$ be a bounded Lipschitz domain, and assume \eqref{fiassumption}-\eqref{f_i-f_j_disjointsupport} hold true. Consider the elliptic system \eqref{main_problems} with $H_R(w)(x)$ given by \eqref{H1}. Then for any $\eps > 0$, and $0 < \sizeofboundary \leq 1$, there exists a unique positive solution 
$(u^{\eps}_1,\ldots, u^{\eps}_K) \in  C^{\alpha}(\overline{\Omega};\mathbb{R}^K)\cap C_{loc}^{2,\alpha}(\Omega;\mathbb{R}^K)$ of \eqref{main_problems}, for some $0 < \alpha < 1$.
\end{theorem}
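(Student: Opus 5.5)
Theorem \ref{existence_and_uniqueness} is cited from \cite{Bozorgnia,CL2,ChPaTo26}, so the plan is to recall the standard two-part argument. Existence comes from a fixed point. Uniqueness comes from monotonicity of the coupling together with a maximum-principle comparison.

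\textbf{Existence.} Fix $\eps>0$ and $R$, and let $M:=\max_i \sup f_i$. Consider the closed convex set
\[
\mathcal{K}=\{(v_1,\dots,v_K)\in C(\overline{\Omega};\R^K): 0\le v_i\le M\},
\]
where each $v_i$ is extended by $f_i$ on $(\partial\Omega)_{\le R}$. Given $v\in\mathcal K$, let $T(v)=(w_1,\dots,w_K)$ be the solution of the decoupled linear problems
\[
\Delta w_i=\eps^{-2}c_i(x)\,w_i \ \text{in }\Omega,\qquad w_i=f_i \ \text{on }\partial\Omega,\qquad c_i=\sum_{j\ne i}H_R(v_j).
\]
The coefficient $c_i$ is bounded, nonnegative and Lipschitz, because averaging over $B_R(x)$ regularizes. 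Since $\Omega$ is Lipschitz, it satisfies an exterior cone condition, so this Dirichlet problem has a unique solution. The maximum principle gives $0\le w_i\le M$, so $T$ maps $\mathcal K$ into itself.

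Boundary Hölder estimates give a uniform $C^\alpha(\overline\Omega)$ bound on $w_i$, using the Hölder data and the cone condition. Hence $T(\mathcal K)$ is precompact. Continuity of $T$ follows from stability of the linear problem under uniform convergence of the coefficients. Schauder's fixed point theorem then yields a solution. Interior Schauder estimates give $C^{2,\alpha}_{loc}$, because the right-hand side is $C^\alpha$.

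Positivity in $\Omega$ follows from the strong maximum principle. Each $u_i\ge0$ and $u_i\not\equiv0$, since $f_i\ne0$ and condition \eqref{size_of_ball_intersect_support} ensures $f_i$ is genuinely attained on $\partial\Omega$. Moreover $u_i$ solves $\Delta u_i-cu_i=0$ with $c\ge0$ bounded, so $u_i>0$ throughout $\Omega$.

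\textbf{Uniqueness.} This is the main obstacle, because the system is competitive rather than cooperative. I would follow the sub/supersolution scheme of \cite{Bozorgnia}. The key monotonicity is that $H_R$ is increasing in $w$, so increasing $u_j$ pushes $u_i$ down. Let $(u_i)$ and $(v_i)$ be two positive solutions. Consider
\[
\lambda^*=\inf\{\lambda\ge1: u_i\le\lambda v_i \text{ and } v_i\le\lambda u_i \text{ for all } i\}.
\]
This is finite by boundary Hopf-type comparability, which follows from the same boundary data and positivity. Suppose $\lambda^*>1$, and say $u_i\le\lambda^* v_i$ with contact at some point.

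Then $\lambda^* v_i$ satisfies
\[
\Delta(\lambda^*v_i)=\eps^{-2}\lambda^*v_i\sum_{j\neq i}H_R(v_j).
\]
Also $\sum_{j\neq i} H_R(v_j)\ge(\lambda^*)^{-1}\sum_{j\neq i} H_R(u_j)$, because $v_j\ge u_j/\lambda^*$. Combined with the scaling, this makes $\lambda^* v_i$ a supersolution-type comparison function for the $u_i$ equation. There is also strict inequality on the boundary where $f_i>0$, since $\lambda^*f_i>f_i$.

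The strong maximum principle applied to $\lambda^*v_i-u_i$ then shows the contact cannot occur, either in the interior or on the boundary. This contradicts the minimality of $\lambda^*$, so $\lambda^*=1$ and $u=v$.

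The delicate point is that the inequality on $H_R$ runs in the wrong direction for a direct comparison. One must instead alternate, comparing $u_i$ against $\lambda v_i$ while simultaneously comparing $v_j$ against $\lambda u_j$. The clean way to do this is via the monotone iteration of \cite{Bozorgnia}. It builds ordered sequences $\underline u^{(k)}\le \overline u^{(k)}$ that are alternately sub- and supersolutions, then shows that their limits coincide using the same $\lambda^*$ argument.
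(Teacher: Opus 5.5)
Your existence argument is fine and is essentially what the paper points to: a Schauder fixed point for the decoupled linear Dirichlet problems, with Perron/exterior-cone regularity at $\partial\Omega$ and interior Schauder estimates. The uniqueness part does not work as written, and the obstruction is structural rather than technical. To get $u_i\le\lambda^*v_i$ with strict inequality, you need $\lambda^*v_i$ to be a supersolution of the $u_i$-equation, i.e. $\Delta(\lambda^*v_i)\le\eps^{-2}(\lambda^*v_i)\sum_{j\ne i}H_R(u_j)$. This amounts to the \emph{upper} bound $\sum_{j\ne i}H_R(v_j)\le\sum_{j\ne i}H_R(u_j)$. What you derive, $\sum_{j\ne i}H_R(v_j)\ge(\lambda^*)^{-1}\sum_{j\ne i}H_R(u_j)$, is a lower bound and gives nothing here. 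The only upper bound you have, $v_j\le\lambda^*u_j$, yields $\sum_{j\ne i}H_R(v_j)\le\lambda^*\sum_{j\ne i}H_R(u_j)$, which is off by exactly the factor you are trying to beat. Concretely, at an interior maximum $x_0$ of $u_i/v_i=\lambda^*$ you only obtain $\sum_{j\ne i}H_R(u_j)(x_0)\le\sum_{j\ne i}H_R(v_j)(x_0)$. That is fully compatible with $\lambda^*>1$: in a competitive system the relatively larger component simply faces weaker competitors. This scaling trick is built for sublinear \emph{cooperative} problems and fails for competitive ones.

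Your fallback does not close the gap, because its last step is ``the same $\lambda^*$ argument''. The monotone iteration produces quasi-solutions $\hat u\le\tilde u$ with $\Delta\tilde u_i=\eps^{-2}\tilde u_i\sum_{j\ne i}H_R(\hat u_j)$ and $\Delta\hat u_i=\eps^{-2}\hat u_i\sum_{j\ne i}H_R(\tilde u_j)$. At an interior maximum of $\tilde u_i/\hat u_i$ you get $\sum_{j\ne i}H_R(\hat u_j)\le\sum_{j\ne i}H_R(\tilde u_j)$, which already holds since $\hat u\le\tilde u$, so there is no contradiction. Equivalently, $d_i=\tilde u_i-\hat u_i\ge 0$ solves the cooperative linear system $-\Delta d_i+\eps^{-2}\big(\sum_{j\ne i}H_R(\hat u_j)\big)d_i=\eps^{-2}\hat u_i\sum_{j\ne i}H_R(d_j)$ with zero boundary data, and the coupling has size $\eps^{-2}$. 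Showing $d\equiv 0$ needs a genuinely new ingredient, such as positivity of the principal eigenvalue of this system or an identity specific to the model. Note that the adjacent two-species fact that $u_1-u_2$ is harmonic is unavailable for the nonlocal coupling. Your proposal supplies no such ingredient, so uniqueness rests entirely on the citation of \cite[Thm 3.3]{Bozorgnia}. That is also all the paper does: it gives no proof and only indicates a sub-/supersolution method. Two smaller points: finiteness of $\lambda^*$ near the parts of $\partial\Omega$ where $f_i=0$ would need a boundary Harnack principle rather than a Hopf lemma, since Lipschitz domains lack interior balls. Also, the strict inequality $\lambda^*f_i>f_i$ holds only where $f_i>0$.
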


It was also known that there is a subsequential limit of $\{(u^{\epsilon}_1, \cdots, u^{\epsilon}_K)\}_{\epsilon > 0}$ as $\epsilon \rightarrow 0^+$.

\begin{theorem}
\cite[Cor 5.6]{CL2}\cite[Thm 2]{ChPaTo26}
\label{main theorem3-old}

Let $\Omega \subset \mathbb{R}^n$ be a bounded Lipschitz domain, and assume \eqref{fiassumption}-\eqref{f_i-f_j_disjointsupport} hold true. Consider the elliptic system \eqref{main_problems} with $H_R(w)(x)$ given by \eqref{H1}.
For any $\eps > 0$ and $0<R\leq 1$,  let $(u^{\eps}_1, \ldots, u^{\eps}_K)$ be a solution to \eqref{main_problems}. Then there exists a subsequence of $\{(u^{\eps}_1,\ldots, u^{\eps}_K) \}_{\eps > 0}$
that converges locally uniformly in $\Omega$ to a limit function $(u_1, \ldots, u_K)$ as $\eps \rightarrow 0^+$. Moreover, the limit function $(u_1, \ldots, u_K)$
has the following properties:
\begin{enumerate}
\item Each function $u_i$ is locally Lipschitz continuous on $\Omega$.
\item  $\Delta u_i = 0$ on $\{u_i > 0\}\cap\Omega$,  for any $i=1,\ldots, K$.
\item  For any $1 \leq i < j \leq K$, the supports of the function $u_i$ and $u_j$ are at distance at least $\sizeofboundary$ from each other. 
\end{enumerate}
\end{theorem}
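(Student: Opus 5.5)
The plan is to obtain $\eps$-independent bounds, extract a limit by compactness, and then read off properties (2) and (3) from the structure of the interaction term. Since every term $\frac{1}{\eps^2}u^\eps_i\sum_{j\neq i}H_R(u^\eps_j)$ is nonnegative, each $u^\eps_i$ is subharmonic in $\Omega$. By the maximum principle, and using the existence result Theorem \ref{existence_and_uniqueness},
$$0\le u^\eps_i\le \sup f_i \quad\text{in } \Omega,$$
uniformly in $\eps$. Testing the equation against a cutoff $\varphi\in C^\infty_c(\Omega)$ and integrating by parts gives
$$\int_\Omega \frac{1}{\eps^2}\,u^\eps_i\sum_{j\ne i}H_R(u^\eps_j)\,\varphi\;=\;\int_\Omega u^\eps_i\,\Delta\varphi\;\le\; C(\varphi).$$
So the interaction measures are locally uniformly bounded, and in particular $\int_{K'} u^\eps_i H_R(u^\eps_j)\le C\eps^2\to0$ on compact sets $K'\subset\Omega$.

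\textbf{Step 1: a uniform local Lipschitz bound.} This is the main obstacle, because the right-hand side blows up as $\eps\to0$ and Schauder estimates degenerate. I would follow the strategy of \cite{CL2}, splitting into two regimes around a point $x_0$.
\begin{itemize}
\item Where $\sum_{j\ne i}H_R(u^\eps_j)\ge\delta$ on a ball, $u^\eps_i$ solves $\Delta u\ge \frac{\delta}{\eps^2}u$. A barrier comparison (for instance with $\cosh(\sqrt{\delta}\,|x-x_0|/\eps)$) gives exponential smallness, $u^\eps_i\le Ce^{-c\sqrt\delta\, d/\eps}$ at depth $d$.
\item Where the coefficient is small, $u^\eps_i$ is nearly harmonic. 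Here I would compare with the harmonic replacement and use interior gradient estimates.
\end{itemize}
To pass between the two regimes, I would use the bounded interaction measure together with a Harnack-type argument for the averaged coefficient $H_R(u^\eps_j)$. The key point is that $H_R(u^\eps_j)$ is itself Lipschitz with constant $C/R$ uniformly in $\eps$, because averaging over $B_R$ smooths it. This should give a uniform modulus, first H\"older and then Lipschitz, on compact subsets. Arzel\`a--Ascoli with a diagonal argument then yields a subsequence converging locally uniformly, and the limit inherits the local Lipschitz bound, which is property (1).

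\textbf{Step 2: separation, property (3).} From $\int_{K'}u^\eps_iH_R(u^\eps_j)\to0$ and uniform convergence, we get $u_i(x)\fint_{B_R(x)}u_j=0$ for every $x\in\Omega$. If $u_i(x)>0$, then by continuity $u_j\equiv0$ on $B_R(x)$ (initially in the interior portion, and extended to the boundary strip using the separated boundary data \eqref{f_i-f_j_disjointsupport}). Since this holds at every point of $\{u_i>0\}$, and hence at every point of its closure by continuity, the supports are at distance at least $R$.

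\textbf{Step 3: harmonicity, property (2).} Fix $x$ with $u_i(x)>0$ and a small ball $B_r(x)$ on which $u_i\ge\eta>0$; for small $\eps$ we then have $u^\eps_i\ge\eta/2$ there. For $j\ne i$, apply the exponential decay of Step 1 with the roles reversed: $H_R(u^\eps_i)\ge c\eta$ on a neighborhood of $\overline{B_{R+r}(x)}$, so $u^\eps_j\le Ce^{-c/\eps}$ on $B_{R+r/2}(x)$. Hence
$$\frac{1}{\eps^2}\sum_{j\ne i}H_R(u^\eps_j)\to0 \quad\text{uniformly on } B_{r/2}(x).$$
Therefore $\Delta u^\eps_i\to0$ uniformly there, and passing to the limit in the weak formulation gives $\Delta u_i=0$ on $\{u_i>0\}$.

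The delicate point in this step is making the decay estimate reach all of $B_R(y)$ for $y\in B_{r/2}(x)$. This is where positivity of $u_i$ on a slightly larger set is needed, which I would arrange by shrinking $r$.
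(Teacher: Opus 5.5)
The paper gives no proof of this statement. It is quoted from \cite{CL2} and \cite{ChPaTo26}, and the real content of those results is the $\eps$-uniform regularity that your Step 1 only gestures at. Your Steps 2 and 3 are essentially sound \emph{once} locally uniform convergence is available, up to bookkeeping:
(a) $H_R(u^\eps_i)\ge c\eta$ holds only on something like $B_{R+r/2}(x)$, not on a neighbourhood of $\overline{B_{R+r}(x)}$, so you get harmonicity on $B_{r/4}(x)$;
(b) near $\partial\Omega$ the averages also see $f_j$ on $(\partial\Omega)_{\le R}$. This is handled by extending $u_j$ by $f_j$ and using dominated convergence, not by the separation of the data. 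The same identity $u_iH_R(u_j)\equiv 0$ then gives $f_j=0$ on $(\partial\Omega)_{\le R}\cap B_{R+r}(x)$, which your barrier in Step 3 needs.
However, the bound $0\le u^\eps_i\le\sup f_i$ and the local bound on the interaction mass give no modulus of continuity. So Arzel\`a--Ascoli, and everything after it, hinges on Step 1, and Step 1 is not an argument.

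Concretely, your two regimes miss the region that matters. Smallness of $a_i:=\sum_{j\ne i}H_R(u^\eps_j)$, say $a_i\le\delta$, does not make $u^\eps_i$ nearly harmonic. The right-hand side is $\eps^{-2}a_iu^\eps_i$, which is negligible only where $a_i=o(\eps^2)$. The zone $\eps^2\ll a_i\ll\delta$ is precisely the transition layer around the future free boundary, where the gradient must be controlled. The uniform Lipschitz bound on $H_R(u^\eps_j)$ does not help either, since the potential actually present is $\eps^{-2}a_i$, whose Lipschitz constant is of order $\eps^{-2}$.

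In fact no argument using only these generic ingredients (boundedness, subharmonicity, bounded interaction mass, a nonnegative uniformly Lipschitz coefficient) can work. In $\R^2$ let $\Sigma$ be a closed sector of opening $\theta\in(\pi,2\pi)$ with vertex $0$, let $a(x)=d(x,\Sigma)$, and solve $\Delta u^\eps=\eps^{-2}a\,u^\eps$ in $B_1$ with $u^\eps=1$ on $\partial B_1$. Then $u^\eps$ is harmonic in the interior of $\Sigma$. It is therefore bounded below by the harmonic function in $\Sigma\cap B_1$ with data $0$ on $\partial\Sigma$ and $1$ on $\partial B_1\cap\Sigma$, which grows like $|x|^{\pi/\theta}$ at $0$. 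Meanwhile $u^\eps(0)\to0$, so there is no $\eps$-uniform Lipschitz bound. With an $\eps$-dependent $1$-Lipschitz coefficient supported in a tube of radius $\eps^{1/2}$ around a segment in $\R^3$, one even loses equicontinuity.

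The missing idea must use the specific geometry of $H_R$. The set $\{\sum_{j\ne i}H_R(u_j)>0\}$ is a union of open balls of radius $R$ centred where the other populations are positive. Hence the region where $u_i$ can live satisfies a uniform exterior ball condition of radius $R$, which is exactly what rules out the corner above. At the $\eps$-level this has to be made quantitative. For instance, you would need exponential smallness of $u^\eps_i$ on balls of radius close to $R$ around points where some $u^\eps_j$ is bounded below, followed by a barrier comparison off those balls giving linear growth of $u^\eps_i$ and hence a gradient bound. That is the work your sketch replaces with ``this should give''.
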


\begin{definition}

The sets $\partial \{u_i > 0\} \cap \Omega, i = 1, \cdots, K$ are called the free boundaries.
\end{definition}

In addition, some preliminary properties of the free boundaries were known.

\begin{theorem}
\cite[Cor 6.2, 6.5]{CL2}\cite[Thm 7.1]{CL2}\cite[Thm 3, 4]{ChPaTo26}
\label{main theorem5-old}

If $x_0 \in \partial \{ u_i > 0\}\cap\Omega$ for some $i = 1, \ldots, K$, then there is an exterior tangent ball $B_R(y)$ at $x_0$, and the set $\{u_i > 0\}\cap\Omega$ has finite perimeter. Moreover, if $x_0 \in \partial \{u_i > 0\} \cap \Omega$, there exists a $j \neq i$ such that $\overline{B_{\sizeofboundary}(x_0)} \cap \partial \{u_j > 0\} \neq \emptyset$.  
\end{theorem}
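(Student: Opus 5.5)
The plan is to pass everything through the limit configuration of Theorem \ref{main theorem3-old}. I would prove the last assertion first, because the exterior ball and the perimeter bound both follow from it.

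\emph{Step 1: a competitor lies within distance $\sizeofboundary$.} Fix $x_0 \in \partial\{u_i>0\}\cap\Omega$ and argue by contradiction. Suppose that $d(x_0, \textnormal{supp}\, u_j) > \sizeofboundary$ for every $j \neq i$. Then there is $\delta>0$ such that, for all $x \in B_\delta(x_0)$ and all $j\neq i$, the ball $B_{\sizeofboundary}(x)$ stays at positive distance from $\textnormal{supp}\, u_j$.

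I would then prove a decay estimate for $u_j^\eps$ off the limit support. On a region where some competitor has average of order one, $u_j^\eps$ is a subsolution of $\Delta w = \frac{c}{\eps^2} w$. Comparing it with exponential barriers of the form $\cosh(\sqrt{c}\,|x-z|/\eps)$ should give $u_j^\eps \le C e^{-c' d/\eps}$ at distance $d$ from $\textnormal{supp}\, u_j$, after also using the uniform convergence $u_j^\eps \to u_j$. Hence $\eps^{-2} H_{\sizeofboundary}(u_j^\eps) \to 0$ uniformly on $B_\delta(x_0)$.

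It follows that $\Delta u_i^\eps \to 0$ uniformly on $B_\delta(x_0)$, so $u_i$ is harmonic in $B_\delta(x_0)$. Now $u_i \ge 0$ and $u_i(x_0)=0$, so the strong minimum principle forces $u_i \equiv 0$ on $B_\delta(x_0)$. This contradicts $x_0 \in \partial\{u_i>0\}$.

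Therefore $d(x_0, \textnormal{supp}\, u_j)\le \sizeofboundary$ for some $j \ne i$. By part (3) of Theorem \ref{main theorem3-old}, this distance is at least $\sizeofboundary$, so it equals $\sizeofboundary$. Choosing a nearest point $y$ of the closed set $\textnormal{supp}\, u_j$ to $x_0$, we get $|x_0-y| = \sizeofboundary$. Moreover $y \in \partial\{u_j>0\}$, since a point of $\textnormal{supp}\, u_j$ in the interior of $\{u_j>0\}$ would admit points of the support closer to $x_0$. This gives $\overline{B_{\sizeofboundary}(x_0)} \cap \partial\{u_j>0\} \neq \emptyset$.

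\emph{Step 2: exterior tangent ball.} Take the same point $y$. Since $\{u_i>0\}$ lies at distance at least $\sizeofboundary$ from $\textnormal{supp}\, u_j \ni y$, the open ball $B_{\sizeofboundary}(y)$ does not meet $\{u_i>0\}$. Its boundary sphere passes through $x_0$, so $B_{\sizeofboundary}(y)$ is an exterior tangent ball at $x_0$, and its radius $\sizeofboundary$ is uniform in $x_0$.

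\emph{Step 3: finite perimeter.} This is a geometric consequence of the uniform exterior ball condition, and I would argue by covering. Near any boundary point, a uniform exterior ball condition of radius $\sizeofboundary$ gives a local estimate $\mathcal{H}^{n-1}(\partial\{u_i>0\}\cap B_r) \le C r^{n-1}$ for $r \ll \sizeofboundary$. One can see this by noting that the $\rho$-neighborhood of the boundary has volume $O(\rho)$, since each boundary point carries a tangent exterior ball. Covering a compact subset of $\Omega$ by finitely many such balls then bounds $\mathcal{H}^{n-1}(\partial\{u_i>0\})$ locally. Since the perimeter is bounded by $\mathcal{H}^{n-1}$ of the topological boundary, the set $\{u_i>0\}$ has locally finite perimeter in $\Omega$.

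The main obstacle is the decay estimate in Step 1, namely that $\eps^{-2}H_{\sizeofboundary}(u_j^\eps)$ vanishes uniformly near $x_0$. Uniform convergence $u_j^\eps\to u_j$ alone does not beat the factor $\eps^{-2}$. The exponential decay requires a quantitative lower bound on a competing population's average in the region where $u_j^\eps$ is being killed. I would first try to obtain that lower bound from the nondegeneracy of $u_k$ on its own support, following \cite{CL2}.
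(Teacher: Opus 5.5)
Your overall structure is right. Once every $x_0\in\partial\{u_i>0\}\cap\Omega$ is known to have a point of some $\textnormal{supp}\,u_j$, $j\neq i$, at distance exactly $R$, Step 2 follows at once. The contradiction scheme in Step 1 (harmonicity near $x_0$ plus the strong minimum principle) is also the correct one.

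The gap is the decay estimate you single out, and the route you suggest for it does not close.

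\begin{itemize}
\item \emph{What is needed.} To get $\eps^{-2}H_R(u_j^\eps)\to 0$ on a neighborhood $B_\delta(x_0)$, you need $u_j^\eps=o(\eps^2)$ on all of $B_{R+\delta}(x_0)$.
\item \emph{What barriers give.} Exponential barriers only give smallness on compact subsets of the set where some $u_k$, $k\neq j$, has $H_R(u_k)>0$ in the limit.
\item \emph{What you can guarantee.} Under your contradiction hypothesis the only competitor you can guarantee is $u_i$. Moreover $H_R(u_i)>0$ is guaranteed only on $B_R(x_0)$, because each $B_R(y)$ with $|y-x_0|<R$ contains a neighborhood of $x_0$.
\item \emph{Where it fails.} Take $y$ in the shell $R\le |y-x_0|<R+\delta$ on the side away from $\{u_i>0\}$. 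The ball $B_R(y)$ may miss $\{u_i>0\}$ and every other support. There $u_j^\eps$ is merely subharmonic, with no $\eps^{-2}$ absorption, and uniform convergence gives only $o(1)$.
\end{itemize}

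A lower bound from nondegeneracy does not help, because the problem is that no competitor is present at all. Controlling $u_j^\eps$ on such a dead zone by the maximum principle would require knowing that the dead zone does not reach $\partial\{u_j>0\}$. That is the statement being proved, applied to $u_j$.

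You do not need pointwise decay. Let the equation of $u_j^\eps$ absorb the factor $\eps^{-2}$, using the symmetry of the kernel $\mathbf{1}_{\{|x-y|<R\}}$ in $H_R$.

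\begin{itemize}
\item Take $r>0$ with $u_j\equiv 0$ (and $f_j\equiv 0$) on $B_{R+3r}(x_0)$ for all $j\neq i$.
\item Take $0\le\phi\in C_c^\infty(B_r(x_0))$.
\item Take $\psi\in C_c^\infty(B_{R+2r}(x_0))$ with $0\le\psi\le 1$ and $\psi=1$ on $B_{R+r}(x_0)$.
\end{itemize}

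Write $H_R$ as a double integral. Since $x\in B_r(x_0)$ and $|x-y|<R$ force $\psi(y)=1$,
\begin{align*}
\frac{1}{\eps^2}\int u_i^\eps\, H_R(u_j^\eps)\,\phi\,dx
&\le \frac{\|\phi\|_\infty}{\eps^2}\int u_j^\eps\, H_R(u_i^\eps)\,\psi\,dy
\le \|\phi\|_\infty\int \Delta u_j^\eps\,\psi\,dy\\
&= \|\phi\|_\infty\int u_j^\eps\,\Delta\psi\,dy
\longrightarrow \|\phi\|_\infty\int u_j\,\Delta\psi\,dy = 0 .
\end{align*}
If $B_{R+2r}(x_0)$ meets $\partial\Omega$, extend $u_j^\eps$ by $f_j=0$. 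The extension is nonnegative, continuous and subharmonic, so its distributional Laplacian dominates $\Delta u_j^\eps$ on $\Omega$, and the limit follows by dominated convergence.

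Since $\Delta u_i\ge 0$ is a measure, testing with nonnegative $\phi$ and summing over $j\neq i$ gives $\Delta u_i=0$ in $B_r(x_0)$. Your minimum-principle argument then finishes Step 1.

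Step 3 is also under-justified. A one-sided exterior ball condition does not make ``the $\rho$-neighborhood of the boundary has volume $O(\rho)$'' evident. What you actually have is
\[
\partial\{u_i>0\}\cap\Omega\subseteq\{x: d(x,C)=R\},\qquad C=\bigcup_{j\neq i}\textnormal{supp}\,u_j .
\]
Finiteness of $\mathcal{H}^{n-1}$ on level sets of a distance function is a known but nontrivial theorem. Invoking it also yields finite perimeter, not merely locally finite perimeter, which is what the statement claims.
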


We recall the definitions of regular and singular free boundary points.

\begin{definition}
\label{basic_definition}
\cite{CL2}\cite[Def 2.1, 2.2]{ChPaTo26_2}

For each $i = 1, \cdots, K$, we define $S_i:= \{x \in \Omega : u_i (x) > 0 \}$, and $C_i := \cup_{j \neq i} S_j$.
A free boundary point $x_0 \in \partial S_i\cap \Omega$ is said to be {\bf regular} if there exists a unique $x_1 \in \partial C_i \cap \Omega$ for which $d(x_0,x_1) = \sizeofboundary.$ A free boundary point is said to be {\bf singular} if it is not regular. 
\end{definition}

Hence, $x_0 \in \partial S_i \cap \Omega$ is singular if and only if there are two distinct points $x_1, x_2 \in \partial C_i \cap \Omega, x_1 \neq x_2$ such that $d(x_0,x_1) = d(x_0,x_2) = \sizeofboundary.$

We also fix our notation of groups.

\begin{notation}
\label{Nonation_for_Groups}
(Notation for Groups)

\begin{enumerate}

\item Denote by $GL(n;\mathbb{R})$ the group of all invertible linear maps on $\mathbb{R}^n$. 
\item Denote by $O(n;\mathbb{R})$ the group of all orthogonal linear maps on $\mathbb{R}^n$. 
\item For each $n \in \mathbb{N}$, we define $C_n$ to be the finite cyclic group of order $n$. 
\item For each $n \in \mathbb{N}$, we define $Sym(n) := \{\sigma: \{1,\cdots,n\}\rightarrow \{1,\cdots,n\}: \sigma \textnormal{ is bijective}.\}$ to be the symmetric group on $n$ letters.
\item For each $n \in \mathbb{N},$ we denote by $D_{2n} := \langle x,y| x^n = y^2 = e, yx = x^{-1}y \rangle$ the dihedral group with $2n$ elements. That is, $D_{2n}$ is the group of all rigid motions of a regular $n-$gon. 
\item For each $n \in \mathbb{N}$, we define $B_n := (C_2 \times C_2 \times \cdots \times C_2) \rtimes Sym(n)$, where $C_2 \times C_2 \times \cdots \times C_2$ denotes the direct product of $n$ copies of $C_2$. Note that $B_n$ is the hyperoctahedral group, namely the group of all isometries of a hypercube in $\mathbb{R}^n$. Hence, the group $B_n$ has order $2^nn!$.          
\item For any $n \in \mathbb{N}$, we define $J_n := \{(a_1,a_2,\cdots,a_n): a_j \in \{1,-1\} \textnormal{ for all } j = 1, \cdots, n\}$. Define 
\begin{equation}
    \alpha\beta := (\alpha_1\beta_1,\alpha_2\beta_2,\cdots,\alpha_n\beta_n) \in J_n.
\end{equation}
for any $\alpha = (\alpha_1,\alpha_2,\cdots,\alpha_n) \in J_n$ and $\beta = (\beta_1,\beta_2,\cdots,\beta_n) \in J_n$.
Note that $J_n$ is a finite abelian group of order $2^n$ under this multiplication and 
\begin{eqnarray}
    J_n \cong C_2 \times C_2 \times \cdots \times C_2, 
\end{eqnarray}
where $C_2 \times C_2 \times \cdots \times C_2$ denotes the direct product of $n$ copies of $C_2$.
\end{enumerate}
\end{notation}

We note that there is another characterization of the group $B_n$.

\begin{remark}      
\label{definition_P_and_Q}
Note that the hyperoctahedral group $B_n$ defined in Notation \ref{Nonation_for_Groups} is isomorphic to a finite subgroup of $O(n;\mathbb{R})$ as follows.

For any $k \in \{1,\cdots,n\}$, define $P_k : \mathbb{R}^n \rightarrow \mathbb{R}^n$ by
\begin{eqnarray}  
\label{definition_for_P_k}
     P_k(x_1,\cdots,x_{k-1},x_k,x_{k+1},\cdots,x_n) = (x_1,\cdots,x_{k-1},-x_k,x_{k+1},\cdots,x_n)
\end{eqnarray}
for all $(x_1,x_2,\cdots,x_n) \in \mathbb{R}^n$. Moreover, for any bijection $\sigma : \{1,\cdots,n\} \rightarrow \{1,\cdots,n\}$, we define $Q_{\sigma} : \mathbb{R}^n \rightarrow \mathbb{R}^n$ by 
\begin{eqnarray}
\label{definition_for_Q_sigma}
Q_{\sigma}(x_1,x_2,\cdots,x_n) := (x_{\sigma(1)},x_{\sigma(2)},\cdots,x_{\sigma(n)})
\end{eqnarray}
for all $(x_1,x_2,\cdots,x_n) \in \mathbb{R}^n$, so all the $P_k$ and $Q_{\sigma}$ are orthogonal transformations. Also, the subgroup of $O(n;\mathbb{R})$ generated by $\{P_k: k \in \{1,\cdots,n\}\} \cup \{Q_{\sigma}: \sigma \in Sym(n)\}$ is isomorphic to $B_n$, where $\{P_k: k \in \{1,\cdots,n\}\}$ generates the group $C_2 \times C_2 \times \cdots \times C_2$ of direct product of $n$ copies of $C_2$, and $\{Q_{\sigma}: \sigma \in Sym(n)\}$ generates the symmetric group on $n$ letters.
\end{remark}

\section{Group Theoretic Constructions of Singular Set}

In this section, we prove Theorem \ref{Main_Result_1}.
We first recall a result that will be used in the proof.
\begin{lemma}
\label{S_i_reach_the_boundary}
For any $i = 1,...,K$ and any connected component $W$ of $S_i,$ $\overline{W} \cap \{x \in \partial \Omega: f_i(x) > 0\}$ is nonempty.
\end{lemma}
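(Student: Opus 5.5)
Lemma \ref{S_i_reach_the_boundary} will be proved by contradiction, using the maximum principle on the component $W$. The argument is run at the level of the limit configuration $(u_1,\dots,u_K)$ from Theorem \ref{main theorem3-old}, where $u_i$ is harmonic in $S_i$.

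Suppose $W$ is a connected component of $S_i$ and $\overline{W}$ does not meet $\{x\in\partial\Omega : f_i(x)>0\}$. Then $\partial W$ splits into two parts. The first is $\partial W\cap\Omega$; it lies in $\partial S_i\cap\Omega$, since $W$ is a component of the open set $S_i$, and there $u_i=0$ by continuity. The second is $\partial W\cap\partial\Omega$. There $u_i=f_i$, and since $f_i\ge 0$ vanishes at all such points by assumption, $u_i=0$ there too.

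\textbf{Step 1: boundary continuity.} We need $u_i$ to extend continuously up to $\partial\Omega$ with value $f_i$. The plan is to obtain this from uniform H\"older estimates up to the boundary for $u_i^\eps$, which come from the $C^\alpha(\overline\Omega)$ regularity in Theorem \ref{existence_and_uniqueness} combined with barrier arguments on the Lipschitz domain. Together with the first step this gives $u_i\in C(\overline W)$ and $u_i=0$ on $\partial W$.

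\textbf{Step 2: maximum principle.} Since $u_i$ is harmonic in $W$ and $W\subset\Omega$ is bounded, the weak maximum principle gives $u_i\le 0$ in $W$. This contradicts $u_i>0$ on $W\subset S_i$, so $\overline W$ must meet $\{f_i>0\}$.

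\textbf{Main obstacle.} The hard part is Step 1: uniform-in-$\eps$ control of $u_i^\eps$ near $\partial\Omega$. One could instead argue at the $\eps$ level. There $u_i^\eps$ is subharmonic, since $\Delta u_i^\eps\ge0$. But $u_i^\eps>0$ everywhere, so components of $\{u_i^\eps>0\}$ are uninformative and passing to the limit is unavoidable. If uniform boundary estimates are not available, I would fall back on the standard estimates from \cite{CL2} stating that $u_i$ is continuous on $\overline\Omega$ and attains its boundary data. With that, Step 1 is immediate.
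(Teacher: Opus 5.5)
Your overall strategy, the maximum principle for the limit $u_i$ on the component $W$, is the right one. The paper gives no argument of its own, only a pointer to Lemma 8.4 of \cite{CL2}. There is, however, a gap in Step 1.

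\textbf{The gap.} The $C^\alpha(\overline\Omega)$ regularity in Theorem \ref{existence_and_uniqueness} holds for each fixed $\eps$, with constants depending on $\eps$. So it gives no uniform control of $u_i^\eps$ near $\partial\Omega$, and the ``barrier arguments'' are never specified. The convergence you may quote is only locally uniform in $\Omega$. At this point nothing tells you how the limit $u_i$ behaves as $x\to\partial\Omega$, so the sentence ``there $u_i=f_i$'' is unjustified. Your fallback appeals to an estimate you have not located. You are also aiming at more than you need: continuity of $u_i$ with value $f_i$ at points where $f_i>0$ plays no role in the argument.

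\textbf{The fix.} The missing idea is the one you noticed and then discarded: use the subharmonicity of $u_i^\eps$ as a one-sided barrier.
\begin{enumerate}
\item Let $h_i\in C(\overline\Omega)$ be harmonic in $\Omega$ with $h_i=f_i$ on $\partial\Omega$. It exists because a bounded Lipschitz domain satisfies an exterior cone condition, so every boundary point is regular.
\item In $\Omega$ we have $\Delta(u_i^\eps-h_i)=\eps^{-2}u_i^\eps\sum_{j\neq i}H_R(u_j^\eps)\ge 0$. Also $u_i^\eps-h_i$ is continuous on $\overline\Omega$ and vanishes on $\partial\Omega$. Hence $0\le u_i^\eps\le h_i$ in $\Omega$, and passing to the limit, $0\le u_i\le h_i$ in $\Omega$.
\item Under your contradiction hypothesis, every $y\in\partial W\cap\partial\Omega$ satisfies $f_i(y)=0$. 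Therefore $\limsup_{x\to y}u_i(x)\le h_i(y)=0$.
\item At $y\in\partial W\cap\Omega$ you already have $u_i(y)=0$.
\item The weak maximum principle in its $\limsup$ form then gives $u_i\le 0$ in $W$, and your Step 2 gives the contradiction.
\end{enumerate}
With Step 1 replaced by this comparison, the argument is complete and needs no boundary regularity of the limit beyond this one-sided bound.
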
       

\begin{proof}
We refer the reader to the proof Lemma 8.4 in \cite{CL2} for the details.
\end{proof}

We also show that the elliptic system \eqref{main_problems} is invariant under orthogonal transformations.

\begin{lemma}
\label{orthogonal_transformation}
Let $U \subset \mathbb{R}^n$ be a bounded Lipschitz domain, and let $f_1, f_2,\cdots, f_K : (\partial U)_{\leq R} \rightarrow \mathbb{R}$ satisfies \eqref{fiassumption}-\eqref{f_i-f_j_disjointsupport}. Suppose that $(u^{\epsilon}_1,u^{\epsilon}_2,\cdots,u^{\epsilon}_K) \in C^2(U) \cap C^0(\overline{U})$ is a solution of 
\begin{equation}
\begin{cases} \Delta u^{\varepsilon}_{i}= \frac{1}{\eps^2} u^{\eps}_i  \sum_{j \neq i}  H_R(u^{\eps}_j)(x)\quad & \text{ in } U,\\
        u^{\eps}_i= f_i & \text{ on } (\partial U)_{\leq {\sizeofboundary}}, \\
        
        u^{\eps}_i \geq 0 & \text{ in } U \cup (\partial U)_{\leq \sizeofboundary},
       \end{cases}
\end{equation}
with $H_{\sizeofboundary}(w)(x)$ given by \eqref{H1}.
Let $A \in O(n;\mathbb{R})$ be such that $A(U) := \{Ax: x \in U\} = U$. Define $w_i^{\epsilon} := u^{\epsilon}_i(Ax)$ for all $\epsilon > 0, i = 1,\cdots,K$, and $x \in \Omega$. Then  
\begin{equation}
\begin{cases} \Delta w^{\varepsilon}_{i} = \frac{1}{\eps^2} w^{\eps}_i  \sum_{j \neq i}  H_R(w^{\eps}_j)(x)\quad & \text{ in } U,\\
w^{\eps}_i = f_i \circ A & \text{ on } (\partial U)_{\leq {\sizeofboundary}}, \\
w^{\eps}_i \geq 0 & \text{ in } U \cup (\partial U)_{\leq \sizeofboundary},
       \end{cases}
\end{equation}
\end{lemma}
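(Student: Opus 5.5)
The plan is to check the three lines of the system one at a time, using only that $A$ is orthogonal and maps $U$ onto itself. First we record two geometric facts. Since $A$ is an isometry with $A(U)=U$, it maps $\partial U$ onto $\partial U$ and $U^c$ onto $U^c$, and $d(Ax,\partial U)=d(x,\partial U)$. Hence $A\big((\partial U)_{\leq R}\big)=(\partial U)_{\leq R}$. This already makes $w_i^\eps$ well defined on $U\cup(\partial U)_{\leq R}$ (we read the ``$x\in\Omega$'' in the statement as $x\in U\cup(\partial U)_{\leq R}$). It also gives the second and third lines at once: for $x\in(\partial U)_{\leq R}$ we have $w_i^\eps(x)=u_i^\eps(Ax)=f_i(Ax)$, and $w_i^\eps\geq 0$ because $u_i^\eps\geq 0$ at $Ax$.

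For the PDE we use two invariances. The first is that the Laplacian commutes with orthogonal changes of variables. By the chain rule, $D^2 w_i^\eps(x)=A^{T}D^2u_i^\eps(Ax)A$, so
\[
\Delta w_i^\eps(x)=\mathrm{tr}\big(A^{T}D^2u_i^\eps(Ax)A\big)=\mathrm{tr}\big(D^2u_i^\eps(Ax)\big)=\Delta u_i^\eps(Ax),
\]
using $AA^{T}=I$. This is valid for $x\in U$, since then $Ax\in U$ and $u_i^\eps$ is $C^2$ near $Ax$. The second is that the averaging operator commutes with $A$:
\[
H_R(w_j^\eps)(x)=\fint_{B_R(x)}u_j^\eps(Ay)\,dy=\fint_{B_R(Ax)}u_j^\eps(z)\,dz=H_R(u_j^\eps)(Ax).
\]
Here we substitute $z=Ay$, use $|\det A|=1$, and use $A(B_R(x))=B_R(Ax)$, which holds because $A$ is a linear isometry.

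Combining these, for $x\in U$,
\[
\Delta w_i^\eps(x)=\Delta u_i^\eps(Ax)=\frac{1}{\eps^2}u_i^\eps(Ax)\sum_{j\neq i}H_R(u_j^\eps)(Ax)=\frac{1}{\eps^2}w_i^\eps(x)\sum_{j\neq i}H_R(w_j^\eps)(x).
\]
The regularity $w_i^\eps\in C^2(U)\cap C^0(\overline U)$ is inherited by composition with the smooth map $A$.

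The only point needing real care is the averaging step. The ball $B_R(x)$ may leave $U$, so we must know that $u_j^\eps\circ A$ is defined on all of $B_R(x)$, or at least that the integrals are taken over the same region before and after the change of variables. Our approach is to assume, as is implicit in the definition \eqref{H1}, that $u_j^\eps$ is defined on $U\cup(\partial U)_{\leq R}$. For $x\in U$ and $|y-x|<R$ with $y\notin U$, the segment from $x$ to $y$ meets $\partial U$, so $d(y,\partial U)\leq|y-x|<R$; hence $B_R(x)\subset U\cup(\partial U)_{\leq R}$. Because $A$ preserves $U\cup(\partial U)_{\leq R}$, the substitution $z=Ay$ carries the domain of integration exactly onto the corresponding one, and nothing else is needed.
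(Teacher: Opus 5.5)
Your proof is correct and follows the same route as the paper. The paper also reads off the boundary condition and nonnegativity from the $A$-invariance of the domain, and it gets the equation from the fact that $\Delta$ commutes with orthogonal maps together with the change of variables $z=Ay$ in the ball average $H_R$. What you add are details the paper treats as clear, namely $A\big((\partial U)_{\leq R}\big)=(\partial U)_{\leq R}$, the chain-rule identity $D^2 w_i^\eps(x)=A^{T}D^2u_i^\eps(Ax)A$, and the inclusion $B_R(x)\subset U\cup(\partial U)_{\leq R}$ for $x\in U$.
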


\begin{proof}
It is clear that $w^{\eps}_i = f_i \circ A$  on  $(\partial U)_{\leq {\sizeofboundary}}$, and $w^{\eps}_i \geq 0$ in $U \cup (\partial U)_{\leq \sizeofboundary}$, so it remains to check the identity 
\begin{eqnarray}
\label{equation_for_w}
\Delta w^{\varepsilon}_{i} = \frac{1}{\eps^2} w^{\eps}_i  \sum_{j \neq i}  H_R(w^{\eps}_j)(x)
\end{eqnarray}
in $U$. To see this, we use the fact that Laplacian commutes with $A$ to obtain 
\begin{equation}
\begin{split}
\Delta w^{\varepsilon}_{i}(x) = \Delta (u^{\varepsilon}_{i}(Ax)) = (\Delta u^{\varepsilon}_{i})(Ax) = \frac{1}{\eps^2} u^{\eps}_i(Ax) \sum_{j \neq i}  H_R(u^{\eps}_j)(Ax)
\\ = \frac{1}{\eps^2} u^{\eps}_i(Ax) \sum_{j \neq i} \fint_{{B}_\sizeofboundary(Ax)}u^{\eps}_j(y)\,dy
= \frac{1}{\eps^2} u^{\eps}_i(Ax) \sum_{j \neq i}  \fint_{{B}_\sizeofboundary(x)}u^{\eps}_j(Az)\,dz = \frac{1}{\eps^2} w^{\eps}_i  \sum_{j \neq i}  H_R(w^{\eps}_j)(x) \nonumber
\end{split}
\end{equation} 
in $U$, where we made a change of variable $y = Az$. This establishes \eqref{equation_for_w}.
\end{proof}

We are now ready to provide the first example of a singular set.

\begin{theorem}
\label{Concrete_Example_of_Singular_Set_two_dim}
Consider the elliptic system \eqref{main_problems} with $H_R(w)(x)$ given by \eqref{H1}. For any $n \geq 2$ and $K \geq 3$, there exists a bounded Lipschitz domain $\Omega \subset \mathbb{R}^n$ and boundary data $f_1, \cdots, f_K : (\partial \Omega)_{\leq \sizeofboundary} \rightarrow \mathbb{R}$ satisfying \eqref{fiassumption}-\eqref{f_i-f_j_disjointsupport}, such that the singular set on the free boundary is nonempty with Hausdorff dimension exactly $n-2$. 
\end{theorem}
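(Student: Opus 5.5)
Following the paper's approach, I will combine rigidity (uniqueness, Theorem \ref{existence_and_uniqueness}) with symmetry under a finite group (Lemma \ref{orthogonal_transformation}). First I treat $n=2$ and then extend to $\mathbb{R}^n$ by taking a product with an interval in the remaining $n-2$ variables.

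\textbf{Step 1: the planar configuration.} Take $\Omega_2 \subset \mathbb{R}^2$ to be a regular $K$-gon, or a disc, centered at the origin, with $K$ large enough relative to $R$ that the boundary sectors are well separated. Choose $f_1$ supported in a small boundary neighborhood of one arc, and set $f_i := f_1 \circ A^{-(i-1)}$, where $A$ is rotation by $2\pi/K$. This keeps the supports at mutual distance at least $R$, so \eqref{fiassumption}-\eqref{f_i-f_j_disjointsupport} hold. By Lemma \ref{orthogonal_transformation}, $(u^\eps_{\sigma(i)}\circ A)$ solves the system with the cyclically permuted data. Uniqueness then gives $u^\eps_{i+1} = u^\eps_i \circ A^{-1}$ exactly. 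I will also make $f_1$ symmetric under a reflection $P$, giving the full dihedral symmetry $D_{2K}$. These symmetries pass to the limit $(u_1,\dots,u_K)$ of Theorem \ref{main theorem3-old}, so $S_{i+1} = A(S_i)$.

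\textbf{Step 2: locating a singular point.} Consider the free boundary point of $S_1$ closest to the origin along its axis of reflection symmetry $\ell$. I need two facts about it. First, the origin is not in any $S_i$, because the sets $S_i$ are disjoint and permuted transitively by $A$ (with $K\ge 3$, a rotation-invariant point cannot lie in a single $S_i$). Second, $S_1$ meets $\ell$, which I get from Lemma \ref{S_i_reach_the_boundary} and the reflection symmetry. Take $x_0 \in \partial S_1 \cap \ell$ to be the point of $\partial S_1\cap\ell$ nearest the origin. By Theorem \ref{main theorem5-old}, there is a point $x_1 \in \partial C_1$ with $d(x_0,x_1)=R$. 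If $x_1 \notin \ell$, then $P x_1$ is a second such point, since $P$ fixes $x_0$ and preserves $C_1$, and so $x_0$ is singular. If $x_1\in\ell$, it lies on the ray through the origin, and it must belong to $\partial S_j$ for some $j\neq 1$. The $A$-symmetry then puts the closure of some $S_j$ within distance $<R$ of $S_1$, or else makes $x_1$ the origin itself, which requires excluding that the origin lies in $\overline{C_1}$. This exclusion is the \emph{main obstacle}. I would first try to show that the set $\bigcap_i\{d(\cdot,S_i)\le R\}$ near the origin forces $d(0,S_i)$ to be the same for all $i$, by symmetry. Then, by $K\ge3$ and the angle between adjacent axes being at most $2\pi/3$, the closest point $x_0$ has at least two $R$-distant points in $\partial C_1$, namely the reflections of one another across $\ell$ in the adjacent sets $S_2$ and $S_K$. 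This would directly give two distinct points and hence singularity.

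\textbf{Step 3: higher dimensions.} Set $\Omega := \Omega_2 \times (-L,L)^{n-2}$, with data that are functions of the first two variables on the lateral boundary, extended suitably to the top and bottom faces. The reflections $x_k \mapsto -x_k$ and the translation-like structure do not give full invariance, so I will instead aim for a large $L$ and localize. Alternatively, and more cleanly, I will use the group generated by $A$ acting on $(x_1,x_2)$ as above, together with reflections in the other coordinates, to get symmetry under $D_{2K}\times C_2^{n-2}$. The same argument then shows that, at least in a central region, the singular points of $\partial S_1$ on the symmetry hyperplane form a set that is a graph over the $(x_3,\dots,x_n)$ directions. This gives Hausdorff dimension at least $n-2$. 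The upper bound $\le n-2$ would come from identifying the singular set exactly with the locus fixed by the reflection in the nearest-point geometry, which is of codimension two.
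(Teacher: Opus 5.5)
\textbf{Verdict.} Your symmetry set-up matches the paper's (uniqueness plus Lemma \ref{orthogonal_transformation}, giving the dihedral equivariance $S_{i+1}=A(S_i)$ and $P(S_j)=S_{2-j}$). However, the argument stops where the real work starts, and there are two genuine gaps.

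\textbf{Step 2 never localizes the supports.} The obstacle you single out is actually trivial. If the origin lay in some $\overline{S_j}$, rotation invariance would put it in every $\overline{S_i}$, contradicting $d(S_i,S_j)\ge R$.

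The real difficulty is the case $x_1\in\ell$. Since $P$ fixes $\ell$ pointwise and $P(S_j)=S_{2-j}$, the line $\ell$ can meet $\overline{S_j}$ only when $2j\equiv 2 \pmod K$. For odd $K$ this forces $j=1$, so your argument does produce a planar singular point. For even $K$, however, nothing you have prevents $\overline{S_{1+K/2}}$ from meeting $\ell$ beyond $x_0$. Your proposed remedy (the set $\bigcap_i\{d(\cdot,S_i)\le R\}$ and the two partners in $S_2$, $S_K$) simply asserts the expected picture.

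The missing idea is the paper's rigidity argument, which pins down $S_i$ completely:
\begin{enumerate}
\item The reflection in the wall $\theta=2\pi(i-1)/K$ (resp.\ $2\pi i/K$) exchanges $S_i$ with $S_{i-1}$ (resp.\ $S_{i+1}$). Hence $\overline{S_i}$ cannot meet the walls of its own sector $U_i$, nor the axis.
\item By Lemma \ref{S_i_reach_the_boundary} every component of $S_i$ reaches $\{f_i>0\}$, so connectedness gives $S_i\subset U_i$.
\item A point of $S_i$ within $R/2$ of a wall would be within $R$ of its mirror image in $S_{i\pm1}$, so $S_i\subset V_i$.
\item A free boundary point inside $V_i$ would be at distance $>R$ from $C_i$, contradicting Theorem \ref{main theorem5-old}. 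Hence $S_i=V_i$.
\end{enumerate}

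\textbf{Step 3 does not prove that the dimension is \emph{exactly} $n-2$.} Your lower bound is only sketched (``in a central region'', ``a graph over $(x_3,\dots,x_n)$''). The upper bound is not proved at all. Lying on the fixed hyperplane of $P$ does not confine the singular set, and you have not excluded singular points off that hyperplane. As the paper itself remarks, there is no general result bounding the singular set by $n-2$ for this system, so the upper bound must come from an explicit description of the free boundary.

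With $S_i=V_i$ on $\Omega=B_D\times(0,1)^{n-2}$, that description is available. The data are supported on the lateral part, so only the dihedral symmetry in $(x_1,x_2)$ is used; no symmetry, localization or large $L$ is needed in the other variables. The free boundary then consists of two flat pieces parallel to the walls at distance $R/2$. Each point on them has a unique partner at distance $R$, namely its mirror image. The exception is the edge where the pieces meet, a copy of $(0,1)^{n-2}$, whose points have two partners, in $\partial S_{i-1}$ and $\partial S_{i+1}$. This gives both bounds at once.

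\textbf{Minor point.} $K$ is prescribed, so it is the radius of the disc, not $K$, that must be taken large relative to $R$.
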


\begin{proof}

Fix a $K \in \mathbb{N}$ with $K \geq 3.$ Pick a $D > 0$ sufficiently large. The proof is divided into several steps. 

{\bf Step 1: Construct the bounded domain and the boundary data.}

We take $B := \{x \in \mathbb{R}^2: \norm{x} < D\}$, and take $\Omega := B \times (0,1)^{(n-2)} \subset \mathbb{R}^n$. 
We also define $E_i := \{(r,\theta) \in \mathbb{R}^2: D < r < D+\sizeofboundary, \frac{2\pi (i-1)}{K} < \theta < \frac{2\pi i}{K}\} \subset \mathbb{R}^2$, $\Omega_i := E_i \times (0,1)^{(n-2)} \subset \mathbb{R}^n$ for all $i \in \{1,2,...,K\}$.

Define $T, L: \mathbb{R}^n \rightarrow \mathbb{R}^n$ by 
\begin{equation}
\begin{split}
    T(x_1,x_2,x_3,\cdots,x_n) = (\cos(\frac{2\pi}{K})x_1 - \sin(\frac{2\pi}{K})x_2, \sin(\frac{2\pi}{K})x_1 + \cos(\frac{2\pi}{K})x_2,x_3,\cdots,x_n), \\
    L(x_1,x_2,x_3,\cdots,x_n) = (x_1,-x_2,x_3,\cdots,x_n). \nonumber
\end{split}
\end{equation}
for all $(x_1,\cdots,x_n) \in \mathbb{R}^n$. 
Observe that $T^K = L^2 = I$, (the identity map on $\mathbb{R}^n$). and $TL = T^{-1}L$. The subgroup of $GL(n;\mathbb{R})$ generated by $T$ and $L$ is isomorphic to the dihedral group $D_{2K}$.

The boundary data $f_1, f_2, ..., f_K$ are defined as follows. Choose $f_1 : (\partial \Omega)_{\leq \sizeofboundary} \rightarrow \mathbb{R}$ to be a nonzero non-negative H\"older continuous function supported in the set $\{(r,\theta) \in \mathbb{R}^2: D \leq r \leq D +\sizeofboundary, \frac{2\pi}{3K} < \theta < \frac{4\pi}{3K}\}\times (0,1)^{(n-2)} \subset \overline{E_1} \times (0,1)^{(n-2)}$ and require that $f_1$ be symmetric with respect to the hyperplane $\{(x_1,x_2,\cdots,x_n) \in \mathbb{R}^n: x_2 = \tan(\frac{\pi}{K})x_1\}.$ 

For each $i \in \{1,...,K\}$ we define $f_i : (\partial \Omega)_{\leq \sizeofboundary} \rightarrow \mathbb{R}$ by setting $f_i := f_1 \circ (T)^{-(i-1)}$. It is clear that the $K-$tuple $(f_1,f_2,...,f_K)$ of non-negative H\"older continuous functions satisfies \eqref{fiassumption}, \eqref{size_of_ball_intersect_support}, and \eqref{f_i-f_j_disjointsupport}.

{\bf Step 2: Show that if $u^{\epsilon} = (u^{\epsilon}_1, \cdots, u^{\epsilon}_K)$ is a solution of \eqref{main_problems} and $u^{\eps} \rightarrow u = (u_1, \cdots, u_K)$ along a subsequence as $\epsilon \rightarrow 0^+$, then 
\begin{enumerate}
\item $u^{\epsilon}_i(x) = u^{\epsilon}_1(T^{-(i-1)}(x))$ for all $x \in \Omega$, 
\item $u^{\epsilon}_{i}(x) = u^{\epsilon}_{i-1}(T^{2i-2}L(x))$ for all $x \in \Omega$ (here $T^{2i-2}L$ is the "reflection with respect to the hyperplane $\{(r,\theta) \in \mathbb{R}^2:\theta = \frac{2\pi}{K}(i-1)\} \times (0,1)^{(n-2)}$"), where we set $u^{\epsilon}_0 := u^{\epsilon}_K$. 
\item $S_i = T^{(i-1)}(S_1)$, where $S_i := \{u_i > 0\}$ for all $i = 1, \cdots, K$. 
\item $S_{i-1} = (T^{2i-2}L)(S_i)$, for all $i = 1, \dots, K$, where $S_0 := S_K := \{u_K > 0\}$
\end{enumerate}
}

For (1), fix a $i \in \{1,\cdots,K\}$ and define $w^{\epsilon}_j(x) := u^{\epsilon}_j(T^{-i+1}(x))$ for all $j \in \{1,\cdots,K\}$ and $x \in \Omega$. Since $T^{-i+1}$ is an orthogonal transformation, by Lemma \ref{orthogonal_transformation} we have $\Delta w^{\epsilon}_j = \frac{1}{\epsilon^2}w^{\epsilon}_j \sum_{k \neq j} H_{\sizeofboundary}(w^{\varepsilon}_k)$ in $\Omega$ and $w^{\epsilon}_j = f_j \circ T^{(-i+1)} = f_{j+i-1}$ on $(\partial \Omega)_{\leq \sizeofboundary}$. Observe that $(u^{\epsilon}_{j+i-1})_{j = 1}^{K}$ is also a solution of the same elliptic system, where we set $u^{\epsilon}_k(x) = u^{\epsilon}_j(x)$ if $k \notin \{1,2,\cdots,K\}$ and $K | (k-j)$, $j \in \{1,2,\cdots,K\}$. The result of (1) follows by applying the uniqueness part of Theorem \ref{existence_and_uniqueness} to $(u^{\epsilon}_{j+i-1})_{j = 1}^{K}$ and $(w^{\epsilon}_{j})_{j = 1}^{K}$ and set $j = 1$.

The proof of (2) proceeds similarly as in (1). Fix a $i \in \{1,\cdots,K\}$ and define $v^{\varepsilon}_j(x) = u^{\varepsilon}_j(T^{2i-2}L(x))$ for all $j \in \{1,\cdots,K\}$ and $x \in \Omega$. Since $T^{2i-2}L$ is an orthogonal transformation, by Lemma \ref{orthogonal_transformation} we have $\Delta v^{\epsilon}_j = \frac{1}{\epsilon^2}v^{\epsilon}_j \sum_{k \neq j} H_{\sizeofboundary}(v^{\varepsilon}_k)$ in $\Omega$ and $v^{\epsilon}_j = f_j \circ T^{2i-2}L$ on $(\partial \Omega)_{\leq \sizeofboundary}$. Observe that $(u^{\epsilon}_{2i-1-j})_{j = 1}^{K}$ is also a solution of the same elliptic system, where we set $u^{\epsilon}_k(x) = u^{\epsilon}_j(x)$ if $k \notin \{1,2,\cdots,K\}$ and $K | (k-j)$, $j \in \{1,2,\cdots,K\}$. The result of (1) follows by applying the uniqueness part of Theorem \ref{existence_and_uniqueness} to $(u^{\epsilon}_{2i-1-j})_{j = 1}^{K}$ and $(v^{\epsilon}_{j})_{j = 1}^{K}$ and set $j = i-1$.

For (3), we note that from (1), upon passing to a subsequence as $\epsilon \rightarrow 0 ^+$, we have $u_i(x) = u_1(T^{-(i-1)}(x))$ for all $i = 1, \cdots,K$, so 
\begin{equation}   
\begin{split}
S_i = \{x \in \Omega: u_i(x) > 0\} \\
= \{x \in \Omega: u_1(T^{-(i-1)}(x)) > 0\} = \{x \in \Omega: T^{-(i-1)}(x) \in S_1\} = T^{(i-1)}(S_1), \\
\end{split}
\end{equation}
so (3) holds. (4) is proved in a similar manner using (2).       

Therefore, there is an action of the dihedral group $\langle T,L\rangle \cong D_{2K}$ on the set $\{S_1,S_2,\cdots,S_K\}$. The action is transitive.

{\bf Step 3: We claim that $S_i \cap (\partial U_j \cap \Omega) = \emptyset$ for all $i, j = 1, \cdots, K$, where $U_j = \{(r,\theta) \in \mathbb{R}^2: 0 < r <  D, \theta \in (\frac{2\pi (j-1)}{K},\frac{2\pi j}{K})\} \times (0,1)^{(n-2)}$ for all $j \in \{1,2,\cdots,K\}$.} (We set $S_k = S_j$ and $U_k = U_j$ if $k \notin \{1,2,\cdots,K\}$ and $K | (k-j)$, $j \in \{1,2,\cdots,K\}$).

To see this, assume by contradiction that there is a $x_0 \in S_i \cap (\partial U_j \cap \Omega)$, so either $x_0 \in S_i \cap \{\theta = \frac{2\pi j}{K}\}$ or $x_0 \in S_i \cap \{\theta = \frac{2\pi (j-1)}{K}\}$. 
If $x_0 \in S_i \cap \{\theta = \frac{2\pi (j-1)}{K}\}$, then by the fact $S_{2j-1-i} = (T^{2j-2}L)(S_i),$ one has $x_0 \in S_i \cap S_{2j-1-i},$ so $S_i$ intersects $S_{2j-1-i}$, which is a contradiction. 
Similarly, in the case where $x_0 \in S_i \cap \{\theta = \frac{2\pi j}{K}\}$ we obtain a contradiction again. 

{\bf Step 4: We claim that $S_i \subset U_i$ for all $i \in \{1,2,\cdots,K\}$, where $U_i = \{(r,\theta) \in \mathbb{R}^2: 0 < r <  D, \theta \in (\frac{2\pi (i-1)}{K},\frac{2\pi i}{K})\} \times (0,1)^{n-2}$ is defined as in Step 3.}

To see this, fix a $i \in \{1,2,\cdots,K\}$, and pick a $x_0 \in S_i$. By Step 3, $x_0 \notin \partial U_j \cap \Omega$ for all $j = 1, \cdots, K$, so $x_0 \in U_k$ for some $k = 1, \cdots, K$. Assume $k \neq i$, and let $W$ be the connected component of $S_i$ with $x_0 \in \overline{W}$. By Lemma \ref{S_i_reach_the_boundary}, $\overline{W} \cap \{x \in \partial \Omega: f_i(x) > 0\}$ is nonempty, so there is a $z_0 \in W \cap U_i.$ Since $W$ is path-connected, (being an open connected set) there is a continuous map $\gamma: [0,1] \rightarrow W$ with $\gamma(0) = x_0$ and $\gamma(1) = z_0.$ The image of $\gamma$ would intersect $\partial U_i \cap \Omega.$ This implies that $S_i \cap (\partial U_i \cap \Omega) \neq \emptyset,$ which contradicts to Step 3. We have shown that $S_i \subset U_i$ for all $i \in \{1,2,...,K\}.$

{\bf Step 5: We claim that $S_i \subset V_i$ for all $i \in \{1,2,...,K\}.$ Here $V_i := \{x \in U_i: d(x,(\partial U_i \cap \Omega)) > \frac{\sizeofboundary}{2}\}$.} 

To see this, note that by the result of Step 2(3) it suffices to check this for $i = 1$. Observe that $\partial U_1 \cap \Omega = (\{(r,\theta) \in \mathbb{R}^2:r \in [0,D],\theta = \frac{2\pi}{K}\} \times (0,1)^{(n-2)}) \cup (\{(r,\theta) \in \mathbb{R}^2 :r \in [0,D],\theta = 0\} \times (0,1)^{(n-2)})$. Assume by contradiction that there is a $p_0 \in S_1 \setminus V_1$. 
By Step 4 we already have $S_1 \subset U_1,$ so $p_0 \in U_1 \setminus  V_1.$ Hence, either $d(p_0, \{(r,\theta) \in \mathbb{R}^2:r \in [0,D],\theta = 0\}\times (0,1)^{(n-2)}) < \frac{\sizeofboundary}{2}$ or $d(p_0, \{(r,\theta) \in \mathbb{R}^2:r \in [0,D],\theta = \frac{2\pi}{K}\} \times (0,1)^{(n-2)}) < \frac{\sizeofboundary}{2}.$ In the former case, we have $L(p_0) \in S_K$ and $d(p_0,L(p_0)) < \sizeofboundary$. This contradicts to the fact that $d(S_1,S_K) \geq \sizeofboundary$ (see for instance Corollary 5.6 in \cite{CL2}). Similarly, in the latter case, one would obtain $d(S_1,S_2) < \sizeofboundary,$ which is a contradiction again. We have shown that $S_i \subset V_i$ for all $i \in \{1,2,...,K\}.$

{\bf Step 6: We claim that $S_i = V_i$ for all $i \in \{1,2,...,K\}$.} 

To see this, note that by Step 2 it suffices to check this for $i = 1.$ Assume by contradiction that $S_1$ is a proper subset of $V_1,$ so there is a free boundary point $p_0 \in \partial S_1 \cap V_1$. For this $p_0$ we have $d(p_0,\partial C_1) > \sizeofboundary,$ contradicting to the fact that $d(p_0,\partial C_1) = \sizeofboundary$ (see for instance \cite{CL2} Theorem 7.1). Consequently, $S_i = V_i$ for all $i \in \{1,2,...,K\}$ and Step 6 is proved. 

{\bf Step 7: Complete the proof}

Finally, observe that the singular set of each population is the ridge of $V_i$, which is diffeomorphic to $(0,1)^{(n-2)}$, so the singular set has Hausdorff dimension exactly $n-2$.  

\end{proof}

}

We give a few remarks about Theorem \ref{Concrete_Example_of_Singular_Set_two_dim}.

\begin{remark}(About Theorem \ref{Concrete_Example_of_Singular_Set_two_dim})

\begin{enumerate}   
\item In the proof of Theorem \ref{Concrete_Example_of_Singular_Set_two_dim}, if we take $n = 2$ and $K = 4$ populations, we have the configuration shown in figure \ref{figure_for_four_populations_2D}. In this example, the free boundary in each population consists of two straight lines meeting at a singular point with angle $\frac{\pi}{2}$, and all the other points are regular. 

\item As a byproduct of the proof of Theorem \ref{Concrete_Example_of_Singular_Set_two_dim_straitification}, we see that if $\Omega$ and $f_1, \cdots, f_K$ are as in the proof of the Theorem, the free boundaries $\partial S_i \cap \Omega$ are uniquely determined. 

\item By the proof of Theorem \ref{Concrete_Example_of_Singular_Set_two_dim_straitification}, if we replace each $f_i$ by $cf_i$, for some constant $c > 0$, so that the identity $f_i := f_1 \circ (T)^{-(i-1)}$ still holds, then the free boundaries $\partial S_i \cap \Omega$ remain invariant, and the singular set remains $(n-2)$-dimensional. It remains open to construct an example in which the boundary data are not symmetric and the singular set on the free boundaries has Hausdorff dimension $n-2$.
\end{enumerate}
\end{remark}

\begin{figure}
{
\usetikzlibrary{calc}

\begin{tikzpicture}[scale=3]
    \draw[thick] (0,0) circle (1cm);
    \node at (0.9, 0.9) {$\Omega$};

    \def\hR{0.2} 

    \begin{scope}
        \clip (0,0) circle (1cm);
        
        \filldraw[fill=blue!10, draw=blue!80, thick] (\hR, \hR) rectangle (1.2, 1.2);
        
        \filldraw[fill=green!15, draw=green!80, thick] (-\hR, \hR) rectangle (-1.2, 1.2);
        
        \filldraw[fill=orange!15, draw=orange!80, thick] (-\hR, -\hR) rectangle (-1.2, -1.2);
        
        \filldraw[fill=red!10, draw=red!80, thick] (\hR, -\hR) rectangle (1.2, -1.2);
    \end{scope}

    \fill (\hR, \hR) circle (0.5pt); 
    \fill (-\hR, \hR) circle (0.5pt);
    \fill (-\hR, -\hR) circle (0.5pt);
    \fill (\hR, -\hR) circle (0.5pt);

    \node[blue!80, font=\footnotesize] at (0.55, 0.55) {$S_1$};
    \node[green!80, font=\footnotesize] at (-0.55, 0.55) {$S_2$};
    \node[orange!80, font=\footnotesize] at (-0.55, -0.55) {$S_3$};
    \node[red!80, font=\footnotesize] at (0.55, -0.55) {$S_4$};

    \draw[<->, dashed, gray] (-\hR, 0.4) -- (\hR, 0.4) node[midway, above, font=\tiny, text=black] {$R$};
    \draw[<->, dashed, gray] (0.4, -\hR) -- (0.4, \hR) node[midway, right, font=\tiny, text=black] {$R$};
\end{tikzpicture}

}
\caption{This figure illustrates Theorem \ref{Concrete_Example_of_Singular_Set_two_dim} in dimension $2$ with $4$ populations. Each population has a singular point with angle $\frac{\pi}{2}$.}
\label{figure_for_four_populations_2D}
\end{figure}

In our next example in Theorem \ref{Concrete_Example_of_Singular_Set_two_dim_straitification}, we shall construct an example in which the singular set is not a single manifold, but a union of finitely many manifolds.

\begin{theorem}
\label{Concrete_Example_of_Singular_Set_two_dim_straitification}
Consider the elliptic system \eqref{main_problems} with $H_R(w)(x)$ given by \eqref{H1}, and let $n \geq 2$. Take $\Omega := B_1(0) \subset \mathbb{R}^n$ to be the unit open ball in $\mathbb{R}^n$. Then there exists $K = 2^n$ boundary data $f_1, \cdots, f_K: (\partial \Omega)_{\leq \sizeofboundary} \rightarrow \mathbb{R}$ satisfying \eqref{fiassumption}-\eqref{f_i-f_j_disjointsupport}, such that for each $i = 1, \cdots, K$, the singular set on the free boundary takes the form 
\begin{eqnarray}
\label{stratification}
\Gamma^{\textnormal{sin}}_{i,0} \cup \Gamma^{\textnormal{sin}}_{i,1} \cup \cdots \cup \Gamma^{\textnormal{sin}}_{i,n-2},
\end{eqnarray}
where for each $k \in \{0,1,\cdots,n-2\}$, $\Gamma^{\textnormal{sin}}_{i,k}$ is nonempty and is a finite union of $k$-dimensional manifolds.  
\end{theorem}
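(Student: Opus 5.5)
The proof will follow the scheme of Theorem \ref{Concrete_Example_of_Singular_Set_two_dim}, with the dihedral group replaced by the reflection group $J_n \cong C_2 \times \cdots \times C_2$ generated by the coordinate reflections $P_1, \dots, P_n$ of Remark \ref{definition_P_and_Q}. Index the $K = 2^n$ populations by $\alpha \in J_n$, and associate to $\alpha$ the open orthant
\[
U_\alpha := \{x \in B_1(0) : \alpha_k x_k > 0 \text{ for all } k\}.
\]
For $\alpha_0 = (1,\dots,1)$, choose $f_{\alpha_0}$ to be a nonzero, nonnegative H\"older function supported in a small cap of $(\partial\Omega)_{\leq R}$ around the direction $(1,\dots,1)/\sqrt n$. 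The support should stay at distance well over $R$ from every coordinate hyperplane, which requires $R$ small (say $R < 1/(4\sqrt n)$). Take $f_{\alpha_0}$ invariant under the permutations $Q_\sigma$; this symmetry is not needed for the argument but tidies the picture. Then set $f_\alpha := f_{\alpha_0} \circ A_\alpha^{-1}$, where $A_\alpha := \prod_{\alpha_k = -1} P_k$. Since $A_\alpha$ is orthogonal and preserves $B_1(0)$, conditions \eqref{fiassumption}--\eqref{f_i-f_j_disjointsupport} follow directly.

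The next step repeats Step 2 of the previous proof, using Lemma \ref{orthogonal_transformation} together with the uniqueness part of Theorem \ref{existence_and_uniqueness}. Since $P_k$ maps $\operatorname{supp} f_\alpha$ onto $\operatorname{supp} f_{P_k\alpha}$, where $P_k\alpha$ flips the $k$-th sign, this gives
\[
u^\eps_{P_k\alpha}(x) = u^\eps_\alpha(P_k x), \qquad u^\eps_\alpha = u^\eps_{\alpha_0} \circ A_\alpha^{-1}.
\]
Passing to the limit, $S_{P_k\alpha} = P_k(S_\alpha)$.

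The arguments of Steps 3--6 then carry over. A point of $S_\alpha$ on $\{x_k = 0\}$ would be fixed by $P_k$ and therefore also lie in $S_{P_k\alpha}$, which contradicts segregation. Combined with Lemma \ref{S_i_reach_the_boundary}, this gives $S_\alpha \subset U_\alpha$. A point of $S_\alpha$ within distance $R/2$ of $\{x_k = 0\}$ is at distance less than $R$ from its reflection in $S_{P_k\alpha}$, which contradicts the distance-$R$ separation in Theorem \ref{main theorem3-old}. Hence
\[
S_\alpha \subset V_\alpha := \{x \in B_1(0) : \alpha_k x_k > R/2 \ \forall k\}.
\]
Finally, if $S_\alpha \subsetneq V_\alpha$, there would be a free boundary point inside $V_\alpha$. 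Such a point is at distance greater than $R$ from every other $V_\beta$, which contradicts Theorem \ref{main theorem5-old} (equivalently, \cite[Thm 7.1]{CL2}). Therefore $S_\alpha = V_\alpha$.

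It remains to identify the singular set, and I expect this to be the main obstacle. For $\alpha = \alpha_0$, the free boundary consists of the faces $\{x_k = R/2\} \cap \partial V_{\alpha_0}$ lying inside $\Omega$. For a point $x_0$ on this boundary, let $I = \{k : (x_0)_k = R/2\}$, which is nonempty. For each $k \in I$, the point $P_k x_0$ lies in $\overline{V_{P_k\alpha_0}}$ at distance exactly $R$. More generally, I will show that the points of $\partial C_{\alpha_0}$ at distance exactly $R$ from $x_0$ are precisely $x_0 - R e_k$ for $k \in I$: for any $\beta \neq \alpha_0$, the distance from $x_0$ to $V_\beta$ is at least $\sqrt{m}\,R$, where $m$ is the number of coordinates in which $\beta$ differs from $\alpha_0$, with equality only if the flipped coordinates all lie in $I$. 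Hence $x_0$ is regular if and only if $|I| = 1$. The singular set is therefore the union over $|I| = j \geq 2$ of the faces
\[
\{x \in \Omega : x_k = R/2 \ \forall k \in I,\ x_l > R/2 \ \forall l \notin I\},
\]
each of which is a relatively open piece of an affine $(n-j)$-plane intersected with the ball, hence an $(n-j)$-manifold. Setting $\Gamma^{\mathrm{sin}}_{\alpha_0,n-j}$ equal to the union of the $\binom{n}{j}$ faces with $|I| = j$, for $j = 2, \dots, n$, gives \eqref{stratification}. Nonemptiness of every stratum, including the corner $(R/2,\dots,R/2)$ at $j = n$, holds because $\sqrt n\, R/2 < 1$. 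The care needed here is to check that the faces actually meet $\Omega$ and that points near $\partial\Omega$ do not produce additional nearest points. The statement then transfers to every $\alpha$ through the symmetry $S_\alpha = A_\alpha(S_{\alpha_0})$.
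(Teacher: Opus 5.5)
Your proposal is correct and follows the paper's proof essentially step for step. You use orthogonal invariance plus uniqueness to get the symmetry $S_{P_k\alpha}=P_k(S_\alpha)$, then show $S_\alpha\subset U_\alpha$, $S_\alpha\subset V_\alpha$, $S_\alpha=V_\alpha$, and read off the corner strata of $\partial V_\alpha$; along the way, your $\sqrt{m}\,R$ distance count (a boundary point is singular exactly when $|I|\ge 2$), your relatively open faces inside $\Omega$, and your explicit smallness condition on $R$ relative to $n$ supply details the paper only asserts or leaves implicit.
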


\begin{proof}
Let $J_n$ be as in Notation \ref{Nonation_for_Groups} (7), which has cardinality $2^n$, so we may instead consider the solution $\{u^{\epsilon}_{\alpha}\}_{\alpha \in J_n}$ of the elliptic system

\begin{equation}
\label{main_problems_with_Laplace_p=1_with_new_symbol}
\begin{cases} \Delta u^{\varepsilon}_{\alpha} = \frac{1}{\eps^2}\, u^{\eps}_{\alpha}  \displaystyle\sum_{\beta \neq \alpha, \beta \in J_n}   H_{\sizeofboundary}(u^{\eps}_{\beta})(x)\quad & \text{ in } \Omega,\\
        u^{\eps}_{\alpha} = f_{\alpha} & \text{ on } (\partial \Omega)_{\leq {\sizeofboundary}}.
       \end{cases}
\end{equation}

For any $\alpha = (\alpha_1,\cdots,\alpha_n) \in J_n$, we define $p_{\alpha} := \frac{1}{\sqrt{n}}(\alpha_1,\cdots,\alpha_n) = \frac{1}{\sqrt{n}}\alpha \in \partial \Omega$. 

The proof is divided into several steps.

{\bf Step 1: Construct the boundary data $f_{\alpha}, \alpha \in J_n$.}  
Fix a $r \in (0,\frac{\sizeofboundary}{2})$. We define $f_{(1,1,\cdots,1)}$ first. We let $f_{(1,1,\cdots,1)} : (\partial \Omega)_{\leq \sizeofboundary} \rightarrow \mathbb{R}$ be supported in $(\partial \Omega)_{\leq \sizeofboundary} \cap B_r(p_{(1,1,\cdots,1)})$. Also, we construct $f_{(1,1,\cdots,1)}$ in such a way that 
\begin{eqnarray}
    f_{(1,1,\cdots,1)} \circ Q_{\sigma}(x_1,\cdots,x_n) = f_{(1,1,\cdots,1)}(x_1,\cdots,x_n)
\end{eqnarray}
for all $(x_1,x_2,\cdots,x_n) \in (\partial \Omega)_{\leq \sizeofboundary}$, where $Q_{\sigma}$ is defined as in \eqref{definition_for_Q_sigma}.       

For all $\alpha = (\alpha_1,\alpha_2,\cdots,\alpha_n) \in J_n$, we define $f_{\alpha} : (\partial \Omega)_{\leq \sizeofboundary} \rightarrow \mathbb{R}$ by setting 
\begin{eqnarray}
\begin{split}
f_{\alpha}(x_1,x_2,\cdots,x_n) = f_{(\alpha_1,\alpha_2,\cdots,\alpha_n)}(x_1,x_2,\cdots,x_n) 
\\ = f_{(1,1,\cdots,1)}(P_1^{\frac{-\alpha_1+1}{2}}P_2^{\frac{-\alpha_2+1}{2}} \cdots P_n^{\frac{-\alpha_n+1}{2}}(x_1,x_2,\cdots,x_n))
\end{split}
\end{eqnarray}
for all $(x_1,x_2,\cdots,x_n) \in (\partial \Omega)_{\leq \sizeofboundary}$ and $\sigma \in Sym(n)$, where $P_1, P_2, \cdots, P_n$ are defined as in \eqref{definition_for_P_k}. We can choose $f_{(1,1,\cdots,1)}$ such that \eqref{fiassumption}-\eqref{f_i-f_j_disjointsupport} hold.

{\bf Step 2: Show that if $u^{\epsilon} = (u^{\epsilon}_{\alpha})_{\alpha \in J_n}$ is a solution of \eqref{main_problems_with_Laplace_p=1_with_new_symbol} and $u^{\eps} \rightarrow u = (u_{\alpha})_{\alpha \in J_n}$ along a subsequence as $\epsilon \rightarrow 0^+$, then 
\begin{enumerate}
\item $u^{\epsilon}_{\alpha}(x) = u^{\epsilon}_{(1,1,\cdots,1)}(P_1^{\frac{-\alpha_1+1}{2}}P_2^{\frac{-\alpha_2+1}{2}} \cdots P_n^{\frac{-\alpha_n+1}{2}}(x))$ for all $x \in \Omega$, $\alpha = (\alpha_1,\alpha_2,\cdots,\alpha_n) \in J_n$.
\item $u^{\epsilon}_{\alpha}(x) = u^{\epsilon}_{\alpha}(x)(Q_{\sigma}(x))$ for all $x \in \Omega$ and $\sigma \in Sym(n)$.    
\item $S_{\alpha} = P_1^{\frac{-\alpha_1+1}{2}}P_2^{\frac{-\alpha_2+1}{2}} \cdots P_n^{\frac{-\alpha_n+1}{2}}(S_{(1,1,\cdots,1)})$, where $S_{\alpha} := \{u_{\alpha} > 0\}$ for all $\alpha \in J_n$.     
\item $S_{\alpha} = (Q_{\sigma})(S_{\alpha})$, for all $\alpha \in J_n$ and $\sigma \in Sym(n)$.      
\end{enumerate}}

For (1), we fix a $\alpha \in J_n$. For any $\beta \in J_n$, we define 
\begin{eqnarray}
v^{\epsilon}_{\beta}(x) := u^{\epsilon}_{\beta}(P_1^{\frac{-\alpha_1+1}{2}}P_2^{\frac{-\alpha_2+1}{2}}\cdots P_n^{\frac{-\alpha_n+1}{2}}(x)). 
\end{eqnarray}
Since $P_1^{\frac{-\alpha_1+1}{2}}P_2^{\frac{-\alpha_2+1}{2}}\cdots P_n^{\frac{-\alpha_n+1}{2}}$ is an orthogonal transformation on $\mathbb{R}^n$, by Lemma \ref{orthogonal_transformation}, we have 
\begin{equation}
\begin{cases} \Delta v^{\varepsilon}_{\beta} = \frac{1}{\eps^2}\, v^{\eps}_{\beta}  \displaystyle\sum_{\gamma \neq \beta, \gamma \in J_n}   H_{\sizeofboundary}(v^{\eps}_{\gamma})(x)\quad & \text{ in } \Omega,\\
v^{\eps}_{\beta} = f_{\beta} \circ P_1^{\frac{-\alpha_1+1}{2}}P_2^{\frac{-\alpha_2+1}{2}}\cdots P_n^{\frac{-\alpha_n+1}{2}} & \text{ on } (\partial \Omega)_{\leq {\sizeofboundary}}.
\end{cases}
\end{equation}

Observe that $(u_{\alpha\beta}^{\epsilon})_{\beta \in J_n}$ also satisfy the same elliptic system as $(v_{\beta})_{\beta \in J_n}$, by the uniqueness part of Theorem \ref{existence_and_uniqueness}, $u_{\alpha\beta}^{\epsilon}(x) = v^{\epsilon}_{\beta}(x) = u^{\epsilon}_{\beta}(P_1^{\frac{-\alpha_1+1}{2}}P_2^{\frac{-\alpha_2+1}{2}}\cdots P_n^{\frac{-\alpha_n+1}{2}}(x))$ for all $x \in \Omega$ and $\beta \in J_n$. The results follows by taking $\beta = (1,1,\cdots,1)$.

For (2), we fix a $\sigma \in Sym(n)$. For any $\beta \in J_n$, we define 
\begin{equation}
    w^{\varepsilon}_{\beta}(x) := u^{\varepsilon}_{\beta}(Q_{\sigma}(x))
\end{equation}
for all $x \in \Omega$. Since $Q_{\sigma}$ is an orthogonal transformation, by Lemma \ref{orthogonal_transformation}, we have 
\begin{equation}
\begin{cases} \Delta w^{\varepsilon}_{\beta} = \frac{1}{\eps^2}\, w^{\eps}_{\beta}  \displaystyle\sum_{\gamma \neq \beta, \gamma \in J_n}   H_{\sizeofboundary}(w^{\eps}_{\gamma})(x)\quad & \text{ in } \Omega,\\
w^{\eps}_{\beta} = f_{\beta} \circ Q_{\sigma} = f_{\beta} & \text{ on } (\partial \Omega)_{\leq {\sizeofboundary}}.
\end{cases}
\end{equation} 
However, note that $(u_{\beta}^{\epsilon})_{\beta \in J_n}$ also satisfy the same elliptic system as $(w_{\beta})_{\beta \in J_n}$, by the uniqueness part of Theorem \ref{existence_and_uniqueness}, $u_{\beta}^{\epsilon}(x) = w^{\epsilon}_{\beta}(x) = u^{\varepsilon}_{\beta}(Q_{\sigma}(x))$ for all $x \in \Omega$ and $\beta \in J_n$. This establishes (2). 

For (3), we note that by (1) and by letting $\varepsilon \rightarrow 0^+$ along a subsequence, we have $u_{\alpha}(x) = u_{(1,1,\cdots,1)}(P_1^{\frac{-\alpha_1+1}{2}}P_2^{\frac{-\alpha_2+1}{2}} \cdots P_n^{\frac{-\alpha_n+1}{2}}(x))$ for all $x \in \Omega$, $\alpha = (\alpha_1,\alpha_2,\cdots,\alpha_n) \in J_n$. Therefore, 
\begin{equation}
\begin{split}
S_{\alpha} = \{x \in \Omega: u_{\alpha}(x) > 0\} 
= \{x \in \Omega: u_{(1,1,\cdots,1)}(P_1^{\frac{-\alpha_1+1}{2}}P_2^{\frac{-\alpha_2+1}{2}} \cdots P_n^{\frac{-\alpha_n+1}{2}}(x)) > 0\}
\\ = \{x \in \Omega: P_1^{\frac{-\alpha_1+1}{2}}P_2^{\frac{-\alpha_2+1}{2}} \cdots P_n^{\frac{-\alpha_n+1}{2}}(x) \in S_{(1,1,\cdots,1)}\} 
\\ = (P_1^{\frac{-\alpha_1+1}{2}}P_2^{\frac{-\alpha_2+1}{2}} \cdots P_n^{\frac{-\alpha_n+1}{2}})^{-1}(S_{(1,1,\cdots,1)}) 
 = (P_1^{\frac{-\alpha_1+1}{2}}P_2^{\frac{-\alpha_2+1}{2}} \cdots P_n^{\frac{-\alpha_n+1}{2}})(S_{(1,1,\cdots,1)}).
\end{split}
\end{equation}   

(4) is proved similarly using (2).

{\bf Step 3: We show that $S_{\alpha} \cap (\partial U_{\beta} \cap \Omega) = \emptyset$ for all $\alpha, \beta \in J_n$, where $U_{\beta} := \{x = (x_1,x_2,\cdots,x_n) \in \Omega: (-1)^\frac{-\beta_k+1}{2}x_k > 0 \textnormal{ for all } k = 1, \cdots, n\}$ for all $\beta = (\beta_1,\cdots,\beta_n) \in J_n$}.

To show that Step 3 holds, we assume by contradiction that there exist $\alpha, \beta \in J_n$, such that $S_{\alpha} \cap (\partial U_{\beta} \cap \Omega) \neq \emptyset$. Pick a $x_0 \in S_{\alpha} \cap (\partial U_{\beta} \cap \Omega)$. Then there exists a $j \in \{1,2,\cdots,n\}$ such that $P_j(x_0) = x_0$, with $P_j$ given by Remark \ref{definition_P_and_Q}. Take $\gamma = (\gamma_1,\cdots,\gamma_n) \in J_n$ by setting $\gamma_k = \alpha_k$ for all $k \neq j$ and $\gamma_j = -\alpha_j$. It follows that $x_0 \in S_{\alpha} \cap S_{\gamma}$. This contradicts to the fact that the supports of different populations are of distance $\sizeofboundary$ from each other.  

{\bf Step 4: We show that $S_{\alpha} \subset U_{\alpha}$ for all $\alpha \in J_n$}. 

To see this, assume by contradiction that there exists a $x_0 \in S_{\alpha} \setminus U_{\alpha}$. Let $W$ be a connected component of $S_{\alpha}$ with $x_0 \in \partial W$. By Lemma \ref{S_i_reach_the_boundary}, $\overline{W} \cap \{x \in \partial \Omega: f_{\alpha}(x) > 0\} \neq \emptyset$, so there is a $z_0 \in W \cap U_{\alpha}$. Since $W$ is path-connected, (being an open connected set) there is a continuous map $\gamma: [0,1] \rightarrow W$ with $\gamma(0) = x_0$ and $\gamma(1) = z_0$. The image of $\gamma$ would intersect $\partial U_{\alpha} \cap \Omega$. This contradicts Step 3.

{\bf Step 5: We show that $S_{\alpha} \subset V_{\alpha}$ for all $\alpha \in J_n$, where $V_{\alpha} = \{x \in U_{\alpha}: d(x,\partial U_{\alpha} \cap \Omega) > \frac{\sizeofboundary}{2}\}$}.

To see this, assume by contradiction that $y_0 \in S_{\alpha} \setminus V_{\alpha}$. By Step 4, we have $y_0 \in U_{\alpha} \setminus V_{\alpha}$. Since $y_0 \in U_{\alpha} \setminus V_{\alpha}$, $d(y_0, \partial U_{\alpha}\cap \Omega) < \frac{\sizeofboundary}{2}$. Hence, there is a $j \in \{1,\cdots,n\}$ such that $d(y_0, \Omega \cap \{x_j = 0\}) < \frac{\sizeofboundary}{2}$. Take $\beta = (\beta_1,\cdots,\beta_n) \in J_n$, where $\beta_i = \alpha_i$ for all $i \neq j$ and $\beta_j = -\alpha_j$. Then $P_j(y_0) \in S_{\beta}$ and $d(y_0,P_j(y_0)) < \sizeofboundary$. This contradicts to the fact that $d(S_{\alpha},S_{\beta}) \geq \sizeofboundary$ (see for instance Corollary 5.6 in \cite{CL2}).        

{\bf Step 6: We show that $S_{\alpha} = V_{\alpha}$ for all $\alpha \in J_n$.}   

To see this, we assume by contradiction that Step 6 fails, so by Step 5, $S_{\alpha}$ is properly contained in $V_{\alpha}$. Consequently, there is a $y_0 \in \partial S_{\alpha} \cap V_{\alpha}$. For this $y_0$ we have $d(y_0, \bigcup_{\beta \in J_n, \beta \neq \alpha} (\partial S_{\beta} \cap \Omega)) > \sizeofboundary$, contradicting to the fact that $d(y_0,\bigcup_{\beta \in J_n, \beta \neq \alpha} (\partial S_{\beta} \cap \Omega)) = \sizeofboundary$ (see for instance \cite{CL2} in Theorem 7.1). Consequently, $S_{\alpha} = V_{\alpha}$ for all $\alpha \in J_n$ and Step 6 is proved.

{\bf Step 7: Conclude the result.}
By Step 6, the free boundary $\partial S_{\alpha} \cap \Omega = \partial V_{\alpha} \cap \Omega$. For any $r \in \{2,\cdots,n\}$, we let
\begin{eqnarray}
\Gamma^{\textnormal{sin}}_{\alpha,n-r} := \bigcup_{1 \leq i_i < i_2 < \cdots < i_r \leq n} [\bigcap_{j = 1}^r\{x = (x_1,x_2,\cdots,x_n) \in \overline{V_{\alpha}}: x_{i_j} = (-1)^{\frac{-\alpha_{i_j}+1}{2}}\frac{\sizeofboundary}{2}\}].
\end{eqnarray}
Then the singular set on the free boundary is $\bigcup_{r = 2}^n \Gamma^{\textnormal{sin}}_{\alpha,n-r}$, and that each $\Gamma^{\textnormal{sin}}_{\alpha,n-r}$ is a union of $C^n_r$ ("$n$ choose $r$") $(n-r)$-dimensional manifolds. We complete the proof.
\end{proof}

We give a few remarks about Theorem \ref{Concrete_Example_of_Singular_Set_two_dim_straitification}.

\begin{remark}
(About Theorem \ref{Concrete_Example_of_Singular_Set_two_dim_straitification})
\begin{enumerate}
\item In Theorem \ref{Concrete_Example_of_Singular_Set_two_dim_straitification}, we require $K = 2^n$. However, using the same technique of the proof, the result holds for some other values of $K$. For example, in the case $n = 3$, we can take $K = 4$ and take $\Omega \subset \mathbb{R}^3$ to be the interior of a regular tetrahedron. Assume $f_1, f_2, f_3, f_4 : (\partial \Omega)_{\leq \sizeofboundary} \rightarrow \mathbb{R}$ are defined near the four vertices of $\Omega$ and are symmetric, then the singular set on the free boundary also takes the form $\Gamma^{\textnormal{sin}}_{i,0} \cup \Gamma^{\textnormal{sin}}_{i,1}$, where $\Gamma^{\textnormal{sin}}_{i,1}$ is the union of three $1$-dimensional lines and $\Gamma^{\textnormal{sin}}_{i,0}$ is the point of intersection of the three lines. Similar results hold for $K = 6$ ($\Omega$ is an octahedron), $K = 12$ ($\Omega$ is a dodecahedron), $K = 20$ ($\Omega$ is an icosahedron). 
\item Also, using the same technique of proof in Theorem \ref{Concrete_Example_of_Singular_Set_two_dim} and Theorem \ref{Concrete_Example_of_Singular_Set_two_dim_straitification}, we also have the same result for the adjacent model \eqref{adjacentsegregation}. That is, for any $n \geq 2$ and $K \geq 3$, there exists a bounded Lipschitz domain $\Omega \subset \mathbb{R}^n$ and boundary data such that the singular set on the free boundary is nonempty with Hausdorff dimension exactly $n-2$. Also, for $K = 2^n$ we have a stratification result of the singular set as in \eqref{stratification}. This complements the result in \cite{CL5}, where it was shown using Almgren's frequency formula that the singular set on the free boundary has Hausdorff dimension at most $n-2$, and we show by concrete examples here that the upper bound is sharp.
\end{enumerate}
\end{remark}

{\bf Proof of Theorem \ref{Main_Result_1}:}
This follows by combining Theorem \ref{Concrete_Example_of_Singular_Set_two_dim} and Theorem \ref{Concrete_Example_of_Singular_Set_two_dim_straitification}. 
$\Box$

\begin{remark}
Our main result Theorem \ref{Main_Result_1} shows that the singular set of dimension $(n-2)$ exists. It remains an open problem that if the Hausdorff dimension of the singular set is always at most $(n-2)$.    
\end{remark}

Finally, we give an example of two populations on concentric spheres in $\mathbb{R}^n$ in which the free boundaries consist of regular points only.

\begin{example}
\label{Example_with_no_singular_points}
Fix $D_1 > \sizeofboundary$ and $D_2 > \sizeofboundary$, and take $\Omega = \{x \in \mathbb{R}^n: D_1 < \norm{x} < D_1 + D_2\}$. Define $f_1, f_2 : (\partial \Omega)_{\leq \sizeofboundary} \rightarrow \mathbb{R}$ by setting $f_1 = 1$ on $\{x \in \mathbb{R}^n: D_1-\sizeofboundary \leq \norm{x} \leq D_1\}; $ $f_1 = 0$ on $\{x \in \mathbb{R}^n: D_1+D_2 \leq \norm{x} \leq D_1+D_2+\sizeofboundary\}$, and define $f_2 = 1-f_1$ on $(\partial \Omega)_{\leq \sizeofboundary}$. Then there exists a $D^* \in (D_1,D_1+D_2-\sizeofboundary)$ such that $\partial S_1 \cap \Omega = \{x \in \mathbb{R}^n: \norm{x} = D^*\}$ and $\partial S_2 \cap \Omega = \{x \in \mathbb{R}^n: \norm{x} = D^*+\sizeofboundary\}$, and all the free boundary points are regular.
\end{example}
\begin{proof}   
Let $(u^{\epsilon}_1, u^{\epsilon}_2) \in C^{2,\alpha}(\Omega;\mathbb{R}^2) \cap C^{\alpha}(\overline{{\Omega}};\mathbb{R}^2)$ be a solution of \eqref{main_problems} with $H_R(w)(x)$ given by \eqref{H1}. We proceed in two steps.  

{\bf Step 1: We show that for any $A \in O(n;\mathbb{R})$, we have $u^{\epsilon}_i(Ax) = u^{\epsilon}_i(x)$ for all $x \in \Omega$ and $i = 1, 2$.}

To see this, we fix a $A \in O(n;\mathbb{R})$, and define $w^{\epsilon}_i = u^{\epsilon}_i \circ A$ for all $i = 1, 2$. Since $A$ is an orthogonal transformation, by Lemma \ref{orthogonal_transformation}, $(w^{\epsilon}_1, w^{\epsilon}_2) \in C^{2,\alpha}(\Omega;\mathbb{R}^2) \cap C^{\alpha}(\overline{{\Omega}};\mathbb{R}^2)$ is also a solution of \eqref{main_problems} with $H_R(w)(x)$ given by \eqref{H1}. However, by Theorem \ref{existence_and_uniqueness}, the solution of \eqref{main_problems} is unique, so $(u^{\epsilon}_1, u^{\epsilon}_2) = (w^{\epsilon}_1, w^{\epsilon}_2) = (u^{\epsilon}_1 \circ A, u^{\epsilon}_2 \circ A)$. 

{\bf Step 2: Conclude the result.}

By passing to the limit as $\epsilon \rightarrow 0^+$ along a subsequence in Step 1, we have $u_i(Ax) = u_i(x)$ for all $x \in \Omega$ and $i = 1, 2$. Therefore, for any $x, y \in \Omega$ with $\norm{x} = \norm{y}$, we have $u_i(x) > 0$ if and only if $u_i(y) > 0$. Hence, there is a $D^*$ such that $\partial S_1 \cap \Omega = \{x \in \mathbb{R}^n: \norm{x} = D^*\}$. Since $d(\partial S_1 \cap \Omega, \partial S_2 \cap \Omega) = \sizeofboundary$ (see for instance Theorem \ref{main theorem5-old}), $\partial S_2 \cap \Omega = \{x \in \mathbb{R}^n: \norm{x} = D^*+\sizeofboundary\}$ and $D^* \in (D_1,D_1+D_2-\sizeofboundary)$. Also, all the free boundary points are regular.
\end{proof}

We give a remark about Example \ref{Example_with_no_singular_points}.

\begin{remark}

In Example \ref{Example_with_no_singular_points}, there is an action of the multiplicative group $S^1 := \{z \in \mathbb{C}: |z| = 1\}$ on $\{S_1,S_2\}$. However, unlike the examples in Theorem \ref{Concrete_Example_of_Singular_Set_two_dim} and Theorem \ref{Concrete_Example_of_Singular_Set_two_dim_straitification}, (where we have the action from {\bf finite discrete} groups $D_{2n}$ and $B_n$), in Example \ref{Example_with_no_singular_points} the group $S^1$ is a {\bf continuous} group. This is reflected by the fact that singular free boundary points exist in Theorem \ref{Concrete_Example_of_Singular_Set_two_dim} and Theorem \ref{Concrete_Example_of_Singular_Set_two_dim_straitification}, but not in Example \ref{Example_with_no_singular_points}. 
\end{remark}

\section*{Acknowledgment}
The author would like to express his gratitude to Professor Monica Torres and Stefania Patrizi for the fruitful discussions during the preparation of the paper.

\end{document}